\pgfplotsset{compat=newest}
\renewcommand\maketitle{
  \begin{center}
    {\large\scshape\@title\par} 
    \vskip 1em 
    {\small\normalfont\scshape\@author\par} 
    \vskip 1em 
    {\normalfont\@date\par} 
  \end{center}
}
\numberwithin{equation}{section}
\newtheorem{definition}{Definition}[section]
\newtheorem{theorem}[definition]{Theorem}
\newtheorem{assumption}[definition]{Assumption}
\newtheorem{condition}[definition]{Condition}
\newtheorem{lemma}[definition]{Lemma}
\appto\footnotesize{\RaggedRight}
\date{}
\begin{document}

\begin{acronym}
\acro{agc}[AGC]{approximated generalized Cauchy}
\acro{av}[avg.]{average}
\acro{bfgs}[BFGS]{Broyden–Fletcher–Goldfarb–Shanno}
\acro{foc}[FOC]{first-order critical}
\acro{fom}[FOM]{full order model}
\acro{hktr}[HKTR]{Hermite kernel trust-region}
\acro{imq}[IMQ]{inverse Multitquadric}
\acro{mse}[MSE]{mean-squared error}
\acro{pde}[PDE]{partial differential equation}
\acro{pd}[p.d.]{positive definite}
\acro{phktr}[PHKTR]{projected Hermite kernel trust-region}
\acro{rkhs}[RKHS]{Reproducing Kernel Hilbert Space}
\acro{spd}[s.p.d.]{strictly positive definite}
\acro{tr}[TR]{trust-region}
\end{acronym}

\pagenumbering{gobble}
\addtocontents{toc}{\protect\enlargethispage{\baselineskip}}
\pagenumbering{arabic}
\selectlanguage{\english}

\title{A trust-region framework for optimization using \\
Hermite kernel surrogate models}
\author{Sven Ullmann, Tobias Ehring, Robin Herkert, Bernard Haasdonk}
\affil{Institute of Applied Analysis and Numerical Simulation, University of Stuttgart, Pfaffenwaldring 57, 70569
Stuttgart, Germany}

{\renewcommand{\thefootnote}{}
\footnote{\footnotesize Email addresses: \texttt{sven.ullmann@mathematik.uni-stuttgart.de}, \texttt{tobias.ehring@mathematik.uni-stuttgart.de}, \texttt{robin.herkert@mathematik.uni-stuttgart.de}, \texttt{haasdonk@mathematik.uni-stuttgart.de}}%
\addtocounter{footnote}{-1}}

  \date{\today}
  \maketitle
  \begin{abstract}
In this work, we present a trust-region optimization framework that employs Hermite kernel surrogate models. The method targets optimization problems with computationally demanding objective functions, for which direct optimization is often impractical due to expensive function evaluations. To address these challenges, we leverage a trust-region strategy, where the objective function is approximated by an efficient surrogate model within a local neighborhood of the current iterate. In particular, we construct the surrogate using Hermite kernel interpolation and define the trust-region based on bounds for the interpolation error. As mesh-free techniques, kernel-based 
methods are naturally suited for medium- to high-dimensional problems. Furthermore, the Hermite formulation incorporates gradient information, enabling precise gradient estimates that are crucial for many optimization algorithms. We prove that the proposed algorithm converges to a stationary point, and we demonstrate its effectiveness through numerical experiments, which illustrate the convergence behavior as well as the efficiency gains compared to direct optimization.
  \end{abstract} 

\vspace{10pt}

{\small \textbf{Keywords:} Surrogate Modeling, Kernel Methods, Optimization, Trust-Region Methods} \\

{\small \textbf{Mathematics Subject Classification (2020)}: 49M41, 80M50, 46E22, 65D12}
  
\section{Introduction}
\label{chapter_introduction}

Optimization methods are essential tools across a wide variety of scientific 
domains. Examples include optimal (material) design in engineering, finding optimal molecular configurations in physics and chemistry or profit maximization in economics. In each of these fields, the goal is to determine a parameter 
in an admissible set that minimizes a given objective function. An 
(unconstrained) optimization problem can typically be stated as
\begin{align}
\label{eqn:optimization_problem}
\min_{\mu \in \mathcal{P}} J(\mu),
\end{align}
where \(J: \mathcal{P} \rightarrow \mathbb{R}\) is a real-valued objective function 
defined on the parameter set \(\mathcal{P} \subseteq \mathbb{R}^p\). A prototypical instance arises in the context of \ac{pde}-constrained optimization:
\begin{align}
\label{pde-constraint minimization problem}
\min_{\mu \in \mathcal{P}} \mathcal{J}(u(\cdot;\mu);\mu),
\end{align}
where $\mathcal{J}: H \times \mathcal{P} \rightarrow \mathbb{R}$ and $u(\cdot;\mu) \in H$ satisfies a \ac{pde}-constraint. The \ac{pde} is typically given in variational form
\begin{align}
\label{primal_eqn}
a(u(\cdot;\mu),v;\mu) \;=\; f(v;\mu)
\qquad\forall \; v\in V,
\end{align}
where $a(\cdot, \cdot; \mu): H \times V \rightarrow \mathbb{R}$ is a parameter-dependent bilinear or nonlinear form, and $f(\cdot;\mu): V \rightarrow \mathbb{R}$ is a  parameter-dependent linear form, both defined over appropriate function spaces $H$ and $V$. This formulation subsumes a wide class of linear and nonlinear \ac{pde}s, including elliptic, parabolic and hyperbolic problems. If there exists a unique solution \(u(\cdot;\mu) \in H\) of  \eqref{primal_eqn} for every parameter $\mu \in \mathcal{P}$, then the optimization problem 
\eqref{pde-constraint minimization problem} can be reformulated in form \eqref{eqn:optimization_problem} using \(J(\mu) := \mathcal{J}(u(\cdot;\mu);\mu).\) In every iteration of the optimization algorithm, the \ac{pde} defined in (\ref{primal_eqn}) must be solved. In real-world applications, however, the computational expenses of frequently solving the \ac{pde} renders a straightforward optimization scheme often impractical. \\

One approach to make these methods more computationally feasible is the so-called \ac{tr} approach. It constrains the search for the subsequent iterate to a localized neighborhood around the current iterate, known as the \ac{tr}. Within this region, the objective function is replaced by a surrogate model, which is designed to be more efficient to evaluate than the original objective function. \ac{tr} methods have been successfully applied in a wide range of scientific fields, including engineering \cite{Kwok1985LocationOS}, physics \cite{Barakat_92}, chemistry \cite{Jensen1994} and also in economics \cite{Studer_portfolio}. Moreover, these methods have been extended to scenarios involving inexact evaluations, such as approximate solutions of linear systems or gradient approximations within sequential least squares frameworks, as demonstrated by \cite{Heinkenschloss2002}. \\

The surrogate model which is used to approximate the objective function within the \ac{tr} plays a crucial role for the effectiveness of the algorithm. Numerous approaches for constructing such models exist, and a general framework for smooth models is presented in \cite{Alexandrov1998}. A comprehensive analysis  using quadratic surrogate models is provided in \cite[Chapter 6]{trust_region_methods}. In the context of \ac{pde}-constrained optimization, model order reduction techniques emerged as an effective way to construct the surrogate model, see e.g., \cite{Keil_2021, Qian2017ACT,  WenZahr2025, YueMeerbergen}. Further, \cite{Kartmann2024} proposes an \ac{tr} framework tailored to iterative regularization methods for inverse problems governed by elliptic \ac{pde}s. \\ 

Kernel methods \cite{wendland_2004} are powerful tools in surrogate modeling, that perform well in many applications, compare \cite{carlberg2019recovering, Denzel2019, doeppel2024goal,scholkopf2002learning}. These methods perform especially well for medium- to high-dimensional problems, as they are meshless, making them less susceptible to the curse of dimensionality. Additionally, they are used extensively in machine learning applications, e.g., in Support Vector Machines for classification, as discussed in \cite{Steinwart2008SupportVM}. Function approximation with standard kernel methods can be improved by using Hermite interpolation \cite[Chapter 16]{wendland_2004}, which interpolates function values as well as the gradients.  \\

In this work, therefore, we leverage Hermite kernel methods to construct the surrogate model used to approximate the objective function within the \ac{tr}. Our key contributions are: \begin{enumerate}
    \item We introduce the \ac{hktr} algorithm (Algorithm \ref{kernel_TR_algorithm}), 
    \item we construct the \ac{tr} not by using balls, as is common in the literature, but by employing the upper bound of the Hermite kernel interpolation error,
    \item we prove convergence of the \ac{hktr} algorithm in 
    \Cref{section_convergence_theory_kernel_TR}.
    
\end{enumerate}

This work is structured as follows: In \Cref{sect:Hermite}, we provide an essential background on kernel functions and depict elementary results mainly regarding Hermite kernel interpolation. In \Cref{section_TR}, we first present general \ac{tr} algorithms. Then, we introduce the \ac{hktr} algorithm and provide a convergence statement, which are the key contributions of this work. \Cref{chapter_numerical_examples} contains numerical examples of specific instances of (\ac{pde}-constrained) optimization problems to illustrate the functionality of the \ac{hktr} algorithm. Our work is concluded in \Cref{sect:Conclusion}. 

\section{Introduction to Hermite kernel interpolation}
\label{sect:Hermite}
We begin by reviewing some fundamental insights about kernel methods. For additional details, see \cite{wendland_2004}. A symmetric function \( k : \Omega \times \Omega \rightarrow \mathbb{R} \), defined on a non-empty set \( \Omega \subseteq \mathbb{R}^N \), is referred to as a kernel. 
A kernel is called \ac{pd} if, for every finite pairwise distinct set \( X_n := \{x_1, \dots, x_n\} \subset \Omega \), the Gram matrix \( \mathcal{K}_{X_n} := \left(k(x_i, x_j) \right)_{i,j=1}^n  \in \mathbb{R}^{n \times n} \) is positive semidefinite. Furthermore, if all such Gram matrices are \ac{pd}, the kernel is referred to as \ac{spd}. Clearly, all \ac{spd} kernels are also \ac{pd} kernels. Kernels that are \ac{pd} are of particular interest as they are uniquely associated with a \ac{rkhs}, denoted by \( \mathcal{H}_k(\Omega) \). An \ac{rkhs} is a Hilbert space of functions \( f : \Omega \rightarrow \mathbb{R} \) with the property that there exists a function \( k : \Omega \times \Omega \rightarrow \mathbb{R} \) such that \( k(x, \,\cdot\,) \in \mathcal{H}_k(\Omega) \) for all \(\; x \in \Omega \) and 
\begin{align}\label{eq:repProp}
\left\langle f, k(x, \,\cdot\,) \right\rangle_{\mathcal{H}_k(\Omega)} = f(x) \textnormal{ for all }  f \in \mathcal{H}_k(\Omega). 
\end{align} 
This is known as the reproducing property and $k$ is the reproducing kernel. Moreover, if $k \in C^2(\Omega \times \Omega)$, then for all $f \in \mathcal{H}_k(\Omega)$, it holds
\begin{align}\label{eq:repPropDeriv}
   \partial^{l}f(x) = \left\langle \partial^{l}_1 k(x,\cdot) ,f \right\rangle_{\mathcal{H}_k(\Omega)} \textnormal{ for all } f \in \mathcal{H}_k(\Omega) \;\;\;\, \forall\; l=1,...,N,
\end{align}
where $\partial^{l}_1$ denotes the partial derivative operator in direction $l$ w.r.t. its first argument. This result is a consequence of the reproducing property and the differentiability of the kernel (see \citep[Theorem 10.45]{wendland_2004}). In the following, to accommodate directional derivatives and the case where no differentiation is applied, we adopt the multi-index notation $ a \in \mathbb{N}^N_0 \textnormal{ with } \Vert a \Vert_1 \leq 1$ in the operator $\partial^{a}_1$.  Note that throughout the work $\Vert \cdot \Vert := \Vert \cdot \Vert_2$ will be denoted as the Euclidean norm. \\
\newline
We proceed with the formulation of Hermite kernel interpolation, a specific instance of generalized kernel interpolation as described in \cite[Chapter 16]{wendland_2004}. Hermite interpolation assumes access to both the values of a target function $f \, : \, \Omega \rightarrow \mathbb{R}$ and  its gradient $ \nabla f \, : \, \Omega \rightarrow \mathbb{R}^N$. For an \ac{spd} kernel $k$ with $k \in C^2(\Omega \times \Omega )$ and a finite pairwise distinct set $X_n = \{x_1,...,x_n \} \subset \Omega$, the objective of Hermite kernel interpolation is to construct a surrogate function $s_f^n$ that satisfies the following constrained minimization problem, 
\begin{align}
\label{eq:interDeriv}
\min_{s_f^n \in\mathcal{H}_k(\Omega) } \left\{ \Vert s_f^n \Vert_{\mathcal{H}_k(\Omega)} \; | \, \partial^{a} s_f^n(x) = \partial^{a} f(x);\; x \in X_n ;\; a \in \mathbb{N}^N_0 \textnormal{ with } \Vert a \Vert_1 \leq 1 \right\},
\end{align}
 where the conditions enforce interpolation of both the function values and their derivatives up to first order. The solution to this infinite-dimensional optimization problem is referred to as the minimal norm interpolant.
The solution admits a finite-dimensional representation, expressed as
\begin{align*}
s_f^n(x) =  \sum_{i=1}^n \alpha_i k(x_i,x) + \left\langle \beta_i , \nabla_1 k(x_i,x) \right\rangle_2,
\end{align*}
where $\langle \cdot, \cdot \rangle_2$ denotes the Euclidean inner product. The coefficients $\{\alpha_i\}_{i=1}^n \subset \mathbb{R}$ and $\{\beta_i\}_{i=1}^n \subset \mathbb{R}^N$ are determined by solving the system of linear equations
\begin{align}\label{eq:matrixM}
\mathcal{M}_{X_n} \begin{bmatrix}
\underline{\alpha} \\ \underline{\beta} 
\end{bmatrix} = \begin{bmatrix}
\underline{s_f^n(X_n)} \\ \, \underline{\nabla s_f^n(X_n)}
\,\end{bmatrix}.
\end{align}
The latter represents the interpolation condition in \eqref{eq:interDeriv}. For more details and a precise definition of the generalized Gram matrix $\mathcal{M}_{X_n}$ we refer to \cite{hermite_kernel_interpolation}. In particular, if the kernel $k$ is assumed to be an \ac{spd} translationally invariant kernel, i.e., $$k(x,y) = \phi(x-y)\, \textnormal{ for }\, x,y \in \Omega,$$ with $\phi\in C^{2}(\Omega)\cap L^1(\Omega)$, then the matrix $\mathcal{M}_{X_n}$  is symmetric positive definite for all pairwise distinct $X_n \subset \Omega$ (see \cite[Proposition 1]{hermite_kernel_interpolation}) and therefore the coefficients  $\{\alpha_i\}_{i=1}^n \subset \mathbb{R}$ and $\{\beta_i\}_{i=1}^{n} \subset \mathbb{R}^N$ are uniquely determined.
In this case, the Hermite interpolant can also be obtained via the orthogonal projection $$\Pi_{V(X_n)}: \mathcal{H}_k(\Omega) \rightarrow V(X_n)$$ of the RKHS $\mathcal{H}_k(\Omega)$ onto the closed subspace $$ V(X_n):=  \textnormal{span} \left\{ \partial^a_1 k(x,\cdot) \; | \; x \in X_n ;\, a \in \mathbb{N}^N_0 \textnormal{ with } \Vert a \Vert_1 \leq 1 \right\} \subset \mathcal{H}_k(\Omega).$$  This results directly from the fact that $\Pi_{V(X_n)}f$ is an interpolant, as for any  $x \in X_n$, it holds
\begin{align*}
 \partial^a \left( \Pi_{V(X_n)}f(x) \right) &= \left\langle   \partial^a_1 k(x,\cdot),\Pi_{V(X_n)}f \right\rangle_{\mathcal{H}_k(\Omega)}\\ &= -\underbrace{\left\langle   \partial^a_1 k(x,\cdot),\left(I-\Pi_{V(X_n)}\right)f \right\rangle_{\mathcal{H}_k(\Omega)}}_{=0 \;(\textnormal{orthogonality of projection error)}} + \left\langle   \partial^a_1 k(x,\cdot),f \right\rangle_{\mathcal{H}_k(\Omega)} = \partial^a_1 f(x),
\end{align*}
where property \eqref{eq:repPropDeriv},  $\partial^a_1 k(x,\cdot) \in V(X_n)$ for $x \in X_n$ and \eqref{eq:repProp} were utilized. Additionally,  $\Pi_{V(X_n)}f$ minimizes the norm  among all interpolants $s \in \mathcal{H}_k(\Omega)$, since with Pythagoras, we have
\begin{align*}
    \left\Vert s \right\Vert_{\mathcal{H}_k(\Omega)}^2 &= \left\Vert s -\Pi_{V(X_n)}s + \Pi_{V(X_n)}s \right\Vert_{\mathcal{H}_k(\Omega)}^2 \\ 
    &= \left\Vert s -\Pi_{V(X_n)}s \right\Vert_{\mathcal{H}_k(\Omega)}^2 + \left\Vert\Pi_{V(X_n)}s \right\Vert_{\mathcal{H}_k(\Omega)}^2 \geq \left\Vert\Pi_{V(X_n)}s \right\Vert_{\mathcal{H}_k(\Omega)}^2 = \left\Vert\Pi_{V(X_n)}f \right\Vert_{\mathcal{H}_k(\Omega)}^2,
\end{align*}
where the last equality follows from 
\begin{align*}
\left\Vert\Pi_{V(X_n)}f-\Pi_{V(X_n)}s \right\Vert_{\mathcal{H}_k(\Omega)}^2 &= 
    \left\langle \Pi_{V(X_n)}\left(f-s\right),\Pi_{V(X_n)}\left(f-s\right) \right\rangle_{\mathcal{H}_k(\Omega)}^2 \\ &=     \left\langle \Pi_{V(X_n)}\left(f-s\right),f-s \right \rangle_{\mathcal{H}_k(\Omega)}^2 \\ 
    &= \left\langle \sum_{i=1}^n \tilde{\alpha}_i k(x_i,x) 
 +\left\langle \tilde{\beta}_i, \nabla_1 k(x_i,x) \right\rangle_2, f-s  \right\rangle_{\mathcal{H}_k(\Omega)}^2 \\
    &= \sum_{i=1}^n \tilde{\alpha}_i \underbrace{\left(f(x_i)-s(x_i)\right)}_{=0} + \langle \tilde{\beta}_i , \underbrace{\nabla  f(x_i)- \nabla s(x_i) }_{=0} \rangle_2 = 0
\end{align*}
for some appropriate  coefficients $\{\tilde{\alpha}_i\}_{i=1}^n \subset \mathbb{R}$ and $\{\tilde{\beta}_i\}_{i=1}^{n} \subset \mathbb{R}^N$. The last equality follows from the reproducing properties \eqref{eq:repProp} and \eqref{eq:repPropDeriv}.  \\

A crucial aspect in the application of Hermite kernel surrogates within the later introduced \ac{hktr} algorithm is the quantification of the point-wise interpolation error. In (Hermite) kernel interpolation, the primary tool for this purpose is the (Hermite) Power function:

\begin{definition}((Hermite) Power function) \label{def:hermite_power_function} \\
Let $\Omega \subseteq \mathbb{R}^N$ be non-empty,  $k \in C^{2}(\Omega \times \Omega)$ an \ac{spd} kernel and  $X_n = \{ x_j \}_{j=1}^n \subset \Omega$ be a pairwise distinct point set, then for a multi-index $a \in \mathbb{N}^N_0$ with $\Vert a \Vert_1 \leq 1$  the Hermite Power function $P^a_{X_n}: \Omega \rightarrow \mathbb{R}$ is given by \begin{align*}
P^a_{X_n}(x) := \left\Vert \left(I-\Pi_{V(X_n)}\right) \left(\partial_1^ak(x,\cdot) \right) \right\Vert_{\mathcal{H}_k(\Omega)}.
\end{align*}
We further define $P_{X_n} := P_{X_n}^0$. 
\end{definition}
With this definition of the Hermite Power function, we obtain the following point-wise error bound on the interpolation error
\begin{align} 
    \left|\partial_1^af(x) - \partial_1^a\left(\Pi_{V(X_n)} f \right)(x) \right|
    & = \left|\left\langle \partial_1^ak(x,\cdot)  , (I- \Pi_{V(X_n)} ) f  \right\rangle_{\mathcal{H}_k(\Omega)} \right| \nonumber \\
    &= \left|\left\langle f , \left(I-\Pi_{V(X_n)}\right)\left(\partial_1^ak(x,\cdot) \right) \right\rangle_{\mathcal{H}_k(\Omega)} \right| \nonumber \\ 
    &\leq 
    \Vert f \Vert_{\mathcal{H}_k(\Omega)} P^a_{X_n}(x)  \label{eqn:def:interpolation error}
\end{align}
for all $x \in \Omega$. Moreover, the gradient error can be bounded by aggregating over all directional derivatives of order $\Vert a \Vert_1 = 1$. Specifically:
\begin{align*}
\left\Vert\nabla f(x) - \nabla \left(\Pi_{V(X_n)} f\right)(x) \right\Vert &\leq \sqrt{\sum_{ a \in \mathbb{N}^N_0,\, \Vert a \Vert_1 = 1}   \left(P^a_{X_n}(x)\right)^2 \Vert f \Vert^2_{\mathcal{H}_k(\Omega)}  }.
\end{align*}

Note that for a function $f \in \mathcal{H}_k(\Omega) $ it is generally not possible to compute $\Vert f\Vert_{\mathcal{H}_k(\Omega)}$. However, the \ac{rkhs}-norm of the Hermite kernel interpolant $s_f^n$ can be computed via \begin{align}
\label{eq:computation_rkhs}
 \Vert s_f^n \Vert_{\mathcal{H}_k(\Omega)}^2 &= \left\langle \sum_{i=1}^n \alpha_i k(x_i,\cdot) + \left\langle \beta_i , \nabla_1 k(x_i,\cdot) \right\rangle_2,  \sum_{i=1}^n \alpha_i k(x_i,\cdot) + \left\langle \beta_i , \nabla_1 k(x_i,\cdot) \right\rangle_2 \right\rangle_{\mathcal{H}_k(\Omega)} \nonumber \\
&= \begin{bmatrix}
    \underline{\alpha} & \underline{\beta}
\end{bmatrix} \mathcal{M}_{X_n}\begin{bmatrix}
    \underline{\alpha} \\ \underline{\beta}
\end{bmatrix},
\end{align}
where $\mathcal{M}_{X_n}, \underline{\alpha}$ and $\underline{\beta}$ are defined in \eqref{eq:matrixM}. By increasing the amount of interpolation points, s.t. the fill-distance  \begin{align*}
h_{X_n} := \sup_{x \in \Omega} \min_{1 \leq i \leq n} \left\Vert x - x_i \right\Vert
\end{align*} converges to zero, it is possible to prove 
\begin{align*}
  \lim_{n \rightarrow \infty}   \Vert s_f^n-f \Vert_{\mathcal{H}_k(\Omega)} = 0  .
\end{align*}
Therefore, for every \( \epsilon > 0 \) there exists an \( N \in \mathbb{N} \) such that for all \( n > N \) the following inequality holds:
\[
  \| s_f^n \|_{\mathcal{H}_k(\Omega)} \leq \| f \|_{\mathcal{H}_k(\Omega)} \leq \| s_f^n - f \|_{\mathcal{H}_k(\Omega)} + \| s_f^n \|_{\mathcal{H}_k(\Omega)} \leq \epsilon + \| s_f^n \|_{\mathcal{H}_k(\Omega)}.
\]
This estimate implies that \begin{align}
\label{eq:estimate_rkhsnorm}
     \| s_f^n \|_{\mathcal{H}_k(\Omega)} \approx \| f \|_{\mathcal{H}_k(\Omega)},
\end{align}
an approximation that will be utilized in the numerical experiments. \\



The Power function  $P_{X_n}$  plays a central role in defining the \ac{tr} constraint in the proposed framework. Its properties are therefore critical to the convergence analysis of the  \ac{hktr} algorithm. In particular, it is essential that the Power function exhibits Hölder continuity. The following theorem establishes mild conditions under which this property holds.
\begin{theorem}(Hölder continuity of the Power function) \label{theorem_P_hölder}\\
Let $\Omega \subseteq \mathbb{R}^N$ be non-empty, $X_n = \{ x_j \}_{j=1}^n \subset \Omega$ be a pairwise distinct point set,  $k \in C^{2}(\Omega \times \Omega)$ be an \ac{spd} kernel with $k(x,\cdot): \Omega \rightarrow \mathbb{R}$ being uniformly Lipschitz continuous for all $x \in \Omega$, i.e., there exists a $C_k < \infty$ such that \begin{align*}
|k(x, \tilde{x}) - k(x, x')| \leq C_k \Vert\tilde{x} - x'\Vert \quad \forall \; \tilde{x}, x' \in \Omega,
\end{align*} then the Power function $P_{X_n}$ is Hölder continuous with $\alpha_{\textnormal{Höl}} = \sfrac{1}{2}$, i.e.,
\begin{align*}
\left\vert P_{X_n}(x) - P_{X_n}(y) \right\vert \leq 4 \sqrt{C_k} \Vert x-y \Vert^{\frac{1}{2}} \quad \forall \; x,y \in \Omega.
\end{align*}
\end{theorem}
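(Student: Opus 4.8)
The plan is to control $|P_{X_n}(x) - P_{X_n}(y)|$ by the $\mathcal{H}_k(\Omega)$-distance between the kernel sections $k(x,\cdot)$ and $k(y,\cdot)$, and then to make that distance explicit in terms of point evaluations of $k$, where the Lipschitz hypothesis can be applied directly.

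First I would use Definition \ref{def:hermite_power_function} with $a = 0$, so that $P_{X_n}(x) = \| (I - \Pi_{V(X_n)})(k(x,\cdot)) \|_{\mathcal{H}_k(\Omega)}$ and likewise for $y$. The reverse triangle inequality then gives
\[
\left| P_{X_n}(x) - P_{X_n}(y) \right| \le \left\| (I - \Pi_{V(X_n)})\big(k(x,\cdot) - k(y,\cdot)\big) \right\|_{\mathcal{H}_k(\Omega)},
\]
and since $\Pi_{V(X_n)}$ is an orthogonal projection, $I - \Pi_{V(X_n)}$ is nonexpansive, so the right-hand side is at most $\| k(x,\cdot) - k(y,\cdot) \|_{\mathcal{H}_k(\Omega)}$. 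This step removes the projection (and hence the point set $X_n$) from the problem entirely.

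Next I would expand the squared norm using the reproducing property \eqref{eq:repProp}, obtaining
\[
\left\| k(x,\cdot) - k(y,\cdot) \right\|_{\mathcal{H}_k(\Omega)}^2 = k(x,x) - 2 k(x,y) + k(y,y),
\]
and then rewrite the right-hand side, using symmetry of $k$, as $\big(k(x,x) - k(x,y)\big) + \big(k(y,y) - k(y,x)\big)$. Applying the uniform Lipschitz bound $|k(z,\tilde x) - k(z,x')| \le C_k \| \tilde x - x' \|$ to each summand — with the fixed first argument being $z = x$ in the first, $z = y$ in the second — yields $\| k(x,\cdot) - k(y,\cdot) \|_{\mathcal{H}_k(\Omega)}^2 \le 2 C_k \| x - y \|$. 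Taking square roots gives
\[
\left| P_{X_n}(x) - P_{X_n}(y) \right| \le \sqrt{2 C_k}\, \| x - y \|^{1/2} \le 4 \sqrt{C_k}\, \| x - y \|^{1/2},
\]
which is the claimed $\tfrac12$-Hölder estimate (the constant $4$ is not sharp but convenient and uniform).

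I do not expect a genuine obstacle here; the only points needing a little care are (i) justifying that $I - \Pi_{V(X_n)}$ has operator norm at most one, so that the projection may be discarded, and (ii) correctly pairing the symmetrized expansion of the squared norm with the Lipschitz hypothesis, which is stated with the first argument held fixed. Everything else is a short direct computation.
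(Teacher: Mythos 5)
Your proof is correct and follows essentially the same route as the paper's: reverse triangle inequality, removal of the projection, expansion of the RKHS distance via the reproducing property, and the Lipschitz hypothesis. The only difference is that you exploit the nonexpansiveness of $I-\Pi_{V(X_n)}$ directly, which yields the slightly sharper constant $\sqrt{2C_k}$, whereas the paper splits off the projected term by the triangle inequality and uses $\bigl\Vert\Pi_{V(X_n)}\bigr\Vert_{\mathcal{L}\left(\mathcal{H}_k(\Omega),\mathcal{H}_k(\Omega)\right)}=1$ together with $\sqrt{a+b}\leq\sqrt{a}+\sqrt{b}$, arriving at $4\sqrt{C_k}$.
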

\begin{proof}
    It holds
    \begin{align}
        \left| P_{X_n}(x) - P_{X_n}(y) \right| &= \left| \left\Vert k(\cdot, x) - \Pi_{V(X_n)}(k(\cdot, x)) \right\Vert_{\mathcal{H}_k(\Omega)} - \left\Vert k(\cdot, y) - \Pi_{V(X_n)}(k(\cdot, y)) \right\Vert_{\mathcal{H}_k(\Omega)} \right| \nonumber \\
        &\leq  \left| \left\Vert k(\cdot, x) - k(\cdot, y)  + \Pi_{V(X_n)}(k(\cdot, y)) - \Pi_{V(X_n)}(k(\cdot, x))\right\Vert_{\mathcal{H}_k(\Omega)} \right| \label{eqn:theorem_P_hölder1} \\
                &\leq   \left\Vert k(\cdot, x) - k(\cdot, y)\right\Vert_{\mathcal{H}_k(\Omega)}  + \left\Vert \Pi_{V(X_n)}(k(\cdot, y) -k(\cdot, x))\right\Vert_{\mathcal{H}_k(\Omega)} \label{eqn:theorem_P_hölder2} \\
                &  \leq  2 \Vert k(\cdot, x) - k(\cdot, y)\Vert_{\mathcal{H}_k(\Omega)} \label{eqn:theorem_P_hölder3}\\
                &  = 2 \sqrt{ k(x, x) - k(x, y) + k(y, y) - k(y, x)} \label{eqn:theorem_P_hölder4}\\
                &\leq 2 \sqrt{|k(x,x) - k(x,y)| + |k(y,y) - k(y,x)|} \label{eqn:theorem_P_hölder5} \\
                 &  \leq  2 \sqrt{ |k(x, x) - k(x, y)|} + 2\sqrt{|k(y, y) - k(y, x)|} \label{eqn:theorem_P_hölder6}\\
                 &  \leq  4 \sqrt{C_k} \Vert x-y \Vert^{\frac{1}{2}}, \label{eqn:theorem_P_hölder7}
    \end{align}
    where we used the inverse triangle inequality in \eqref{eqn:theorem_P_hölder1}, the triangle inequality in \eqref{eqn:theorem_P_hölder2}, the submultiplicativity of the norm together with the fact that $\Vert\Pi_{V(X_n)}\Vert_{\mathcal{L}\left(\mathcal{H}_k(\Omega),\mathcal{H}_k(\Omega)\right)} = 1$ in \eqref{eqn:theorem_P_hölder3}, the reproducing property of the \ac{rkhs} in \eqref{eqn:theorem_P_hölder4}, the monotonicity of the square root in \eqref{eqn:theorem_P_hölder5}, the fact that $\sqrt{a +b} \leq \sqrt{a} + \sqrt{b}$ for $a, b \geq 0$ in \eqref{eqn:theorem_P_hölder6}  and the uniform Lipschitz continuity of the kernel in \eqref{eqn:theorem_P_hölder7}. 
\end{proof}
The Lipschitz continuity of the gradient of the kernel surrogate model is another key property in the convergence analysis of the \ac{hktr} algorithm introduced later. This property can be ensured under relatively mild conditions on the kernel, as demonstrated by the following theorem.
\begin{theorem}(Lipschitz continuity of the Hermite kernel interpolant) 
\label{thm:LipschitzDerivKernel} \\
Let $\Omega \subseteq \mathbb{R}^N$ be non-empty and $k \in C^{2}(\Omega \times \Omega)$ be an \ac{spd} kernel  with $\partial_1^l\partial_2^l k(x,\cdot): \Omega \rightarrow \mathbb{R}$ being uniformly Lipschitz continuous for all $x \in \Omega$ and all $l=1,...,N$ with maximum Lipschitz constant $C_{\nabla k} \geq 0$, then  the gradient of the Hermite kernel interpolant $ s_f^n = \Pi_{V(X_n)} f$ for $f \in \mathcal{H}_{k(\Omega)}$ is uniformly Lipschitz continuous w.r.t. the set of interpolation points. Specifically, there exists a constant $C_{k, \nabla k, f, N}
\geq 0$ such that \begin{align*}
    \left\Vert \nabla \left(\Pi_{V(X_n)} f\right)(x) - \nabla \left(\Pi_{V(X_n)} f\right)(x') \right\Vert \leq C_{k, \nabla k, f, N} \Vert x - x' \Vert \quad \textnormal{ for all } x, x' \in \Omega
\end{align*} 
for all finite, pairwise distinct subsets $X \subset \Omega$, where  $C_{k, \nabla k, f, N} := 2 C_{\nabla k}\sqrt{N}\Vert  f \Vert_{\mathcal{H}_k(\Omega)}$. 
\end{theorem}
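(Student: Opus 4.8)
The plan is to control each partial derivative $\partial^l(\Pi_{V(X_n)}f)$, $l=1,\dots,N$, by a kernel-only quantity and then aggregate the $N$ estimates. Since $k\in C^2(\Omega\times\Omega)$, the section $\partial_1^l k(x,\cdot)$ lies in $\mathcal H_k(\Omega)$ and the differentiated reproducing property \eqref{eq:repPropDeriv} applies; in particular, applying it with $g=\Pi_{V(X_n)}f\in\mathcal H_k(\Omega)$ gives
\[
\partial^l(\Pi_{V(X_n)}f)(x)-\partial^l(\Pi_{V(X_n)}f)(x')
=\bigl\langle \partial_1^l k(x,\cdot)-\partial_1^l k(x',\cdot),\,\Pi_{V(X_n)}f\bigr\rangle_{\mathcal H_k(\Omega)}.
\]
Cauchy–Schwarz together with $\|\Pi_{V(X_n)}f\|_{\mathcal H_k(\Omega)}\le\|f\|_{\mathcal H_k(\Omega)}$ (using $\|\Pi_{V(X_n)}\|_{\mathcal L(\mathcal H_k(\Omega),\mathcal H_k(\Omega))}=1$, as in the proof of \Cref{theorem_P_hölder}) then yields
\[
\bigl|\partial^l(\Pi_{V(X_n)}f)(x)-\partial^l(\Pi_{V(X_n)}f)(x')\bigr|
\le \bigl\|\partial_1^l k(x,\cdot)-\partial_1^l k(x',\cdot)\bigr\|_{\mathcal H_k(\Omega)}\,\|f\|_{\mathcal H_k(\Omega)} .
\]
Note that this already makes the bound uniform over the interpolation set $X_n$, the only $X_n$-dependence having been absorbed into the norm-one projection and discarded.

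The heart of the argument is therefore the kernel estimate $\bigl\|\partial_1^l k(x,\cdot)-\partial_1^l k(x',\cdot)\bigr\|_{\mathcal H_k(\Omega)}\le 2\,C_{\nabla k}\,\|x-x'\|$. To reach it I would expand the square and again invoke \eqref{eq:repPropDeriv}: since $\partial_1^l k(x',\cdot)\in\mathcal H_k(\Omega)$, one has $\langle\partial_1^l k(x,\cdot),\partial_1^l k(x',\cdot)\rangle_{\mathcal H_k(\Omega)}=\partial_1^l\partial_2^l k(x',x)$, and because $k$ is symmetric and $k\in C^2$ (so mixed partials commute) the function $\psi_l:=\partial_1^l\partial_2^l k$ is symmetric, $\psi_l(a,b)=\psi_l(b,a)$. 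Hence
\[
\bigl\|\partial_1^l k(x,\cdot)-\partial_1^l k(x',\cdot)\bigr\|_{\mathcal H_k(\Omega)}^2
=\psi_l(x,x)-2\psi_l(x,x')+\psi_l(x',x'),
\]
a mixed second difference of $\psi_l$, which is nonnegative. This is the quantity to be estimated by means of the hypothesis that $\psi_l(x,\cdot)=\partial_1^l\partial_2^l k(x,\cdot)$ is Lipschitz continuous with constant $C_{\nabla k}$, uniformly in $x$ and $l$, using also the symmetry of $\psi_l$ so that Lipschitz control in the second slot transfers to the first.

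Granting the kernel estimate, the theorem follows by aggregating over the coordinate directions:
\[
\bigl\|\nabla(\Pi_{V(X_n)}f)(x)-\nabla(\Pi_{V(X_n)}f)(x')\bigr\|
=\Bigl(\sum_{l=1}^N\bigl|\partial^l(\Pi_{V(X_n)}f)(x)-\partial^l(\Pi_{V(X_n)}f)(x')\bigr|^2\Bigr)^{1/2}
\le 2\,C_{\nabla k}\sqrt N\,\|f\|_{\mathcal H_k(\Omega)}\,\|x-x'\| ,
\]
which is exactly the claimed inequality with $C_{k,\nabla k,f,N}=2\,C_{\nabla k}\sqrt N\,\|f\|_{\mathcal H_k(\Omega)}$, and holds for every finite pairwise distinct $X_n\subset\Omega$.

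I expect the kernel estimate to be the real obstacle, specifically upgrading the bound on the second difference $\psi_l(x,x)-2\psi_l(x,x')+\psi_l(x',x')$ from a merely Hölder-$\tfrac12$ to a \emph{linear} estimate: a brute-force split into $[\psi_l(x,x)-\psi_l(x,x')]+[\psi_l(x',x')-\psi_l(x',x)]$ followed by two applications of the Lipschitz bound only delivers $O(\|x-x'\|)$ for the squared quantity, hence $O(\|x-x'\|^{1/2})$ for the norm. Extracting the sharp linear bound $2C_{\nabla k}\|x-x'\|$ is where the Lipschitz hypothesis on the mixed second derivative has to be exploited carefully — via the symmetry $\psi_l(a,b)=\psi_l(b,a)$ and the structure of the difference as the squared $\mathcal H_k(\Omega)$-norm $\|\partial_1^l k(x,\cdot)-\partial_1^l k(x',\cdot)\|_{\mathcal H_k(\Omega)}^2$. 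Everything else — the reproducing-property manipulations, the Cauchy–Schwarz step, the uniformity in $X_n$, and the coordinate aggregation — is routine.
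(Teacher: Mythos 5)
Your architecture — differentiated reproducing property, Cauchy--Schwarz, $\Vert\Pi_{V(X_n)}\Vert_{\mathcal{L}(\mathcal H_k(\Omega),\mathcal H_k(\Omega))}=1$, then aggregation over the $N$ coordinates — is exactly the paper's, and those steps are correct. But the proposal stops short of the one estimate everything hinges on, namely $\Vert\partial_1^l k(x,\cdot)-\partial_1^l k(x',\cdot)\Vert_{\mathcal H_k(\Omega)}\le 2C_{\nabla k}\Vert x-x'\Vert$, and you explicitly flag that you cannot derive it from the stated hypothesis. That is a genuine gap, and your diagnosis of it is accurate: the squared norm equals the second difference $D_l:=\psi_l(x,x)-\psi_l(x,x')+\psi_l(x',x')-\psi_l(x',x)$ with $\psi_l=\partial_1^l\partial_2^l k$, the Lipschitz hypothesis on $\psi_l(x,\cdot)$ gives only $D_l\le 2C_{\nabla k}\Vert x-x'\Vert$, and hence only a H\"older-$\sfrac{1}{2}$ bound on the norm. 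No amount of careful exploitation of that hypothesis alone can upgrade this to a linear bound: for the $C^2$ Mat\'ern kernel $\phi(r)=(1+\varepsilon|r|)e^{-\varepsilon|r|}$ in $N=1$, $\psi_1(x,x')=-\phi''(x-x')$ is Lipschitz, yet $D_1=2(\phi''(x-x')-\phi''(0))\sim 4\varepsilon^3|x-x'|$ near the diagonal, so the intermediate estimate is genuinely of order $\Vert x-x'\Vert^{1/2}$ and not $\Vert x-x'\Vert$.

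You should also know that the paper's own proof does not close this gap: in the passage from \eqref{eqn:theorem_s_Lip3} to \eqref{eqn:theorem_s_Lip4}, the squared RKHS norm, which by \eqref{eq:repPropDeriv} \emph{equals} $D_l$, is bounded by $D_l^2$; the implicit inequality $D_l\le D_l^2$ fails whenever $0<D_l<1$, which is precisely the regime $x'\to x$ that matters for a Lipschitz estimate. A correct route to the stated conclusion is to strengthen the hypothesis: if $\psi_l$ has bounded mixed second derivatives (essentially $k\in C^4$ with bounded fourth-order derivatives, which holds for the Gaussian and the second-order Wendland kernel used later), then writing $D_l=\int_0^1\!\!\int_0^1 (x-x')^{T}\nabla_1\nabla_2\psi_l\big(x'+s(x-x'),\,x'+t(x-x')\big)(x-x')\,\mathrm{d}s\,\mathrm{d}t$ yields $D_l\le C\Vert x-x'\Vert^2$ and hence the claimed uniform Lipschitz bound. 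So your proposal is incomplete, but the obstacle you isolated is real, and under the hypotheses as written the conclusion should be weakened to H\"older-$\sfrac{1}{2}$ continuity of the gradient (which, incidentally, is all that \Cref{auxilary_statements_from_YM} actually needs through the H\"older continuity of $c^{(i)}$).
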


\begin{proof}
We have
\begin{align} 
    &\left\Vert\nabla \left(\Pi_{V(X_n)} f\right)(x) - \nabla \left(\Pi_{V(X_n)} f\right)(x') \right\Vert^2 \nonumber \\
    =& \sum_{l=1}^N  \left| \partial^l \left(\Pi_{V(X_n)} f\right)(x) - \partial^l \left(\Pi_{V(X_n)} f\right)(x') \right|^2 \nonumber \\
    =& \sum_{l=1}^N  \left \langle \partial^l_1 k(x,\cdot)- \partial^l_1 k(x',\cdot), \Pi_{V(X_n)} f \right \rangle_{\mathcal{H}_k(\Omega)}^2 \label{eqn:theorem_s_Lip1} \\
        \leq & \sum_{l=1}^N \left\Vert \partial^l_1 k(x,\cdot)- \partial^l_1 k(x',\cdot) \right\Vert_{\mathcal{H}_k(\Omega)}^2  \left\Vert  \Pi_{V(X_n)} f \right\Vert_{\mathcal{H}_k(\Omega)}^2  \label{eqn:theorem_s_Lip2}\\
                \leq & \Vert  f \Vert_{\mathcal{H}_k(\Omega)}^2 \sum_{l=1}^N \left\Vert \partial^l_1 k(x,\cdot)- \partial^l_1 k(x',\cdot) \right\Vert_{\mathcal{H}_k(\Omega)}^2 \label{eqn:theorem_s_Lip3} \\ 
                \leq & \Vert  f \Vert_{\mathcal{H}_k(\Omega)}^2 \sum_{l=1}^N  \left(\partial_1^l\partial_2^l k(x,x)- \partial_1^l\partial_2^l k(x,x') +\partial_1^l\partial_2^l k(x',x')- \partial_1^l\partial_2^l k(x',x) \right)^2 \label{eqn:theorem_s_Lip4}\\
          \leq& \Vert  f \Vert_{\mathcal{H}_k(\Omega)}^2\sum_{l=1}^N   \left( \left|\partial_1^l\partial_2^l k(x,x)- \partial_1^l\partial_2^l k(x,x') \right| +\left|\partial_1^l\partial_2^l k(x',x')- \partial_1^l\partial_2^l k(x',x)\right|\right)^2\label{eqn:theorem_s_Lip5}\\
          \leq& \Vert  f \Vert_{\mathcal{H}_k(\Omega)}^2\sum_{l=1}^N   \left(C_{\nabla k}\Vert x - x' \Vert +C_{ \nabla k} \Vert x - x' \Vert\right)^2 \label{eqn:theorem_s_Lip6} \\  
           \leq & \; 4C_{\nabla k}^2 \;  N \Vert  f \Vert_{\mathcal{H}_k(\Omega)}^2\Vert x - x' \Vert^2 \nonumber \\
           =& \; C_{k, \nabla k, f, N}^2 \Vert x - x' \Vert^2 \nonumber,
\end{align}
    where we used 
    \eqref{eq:repPropDeriv} in \eqref{eqn:theorem_s_Lip1}, the Cauchy-Schwartz inequality in \eqref{eqn:theorem_s_Lip2}, the submultiplicativity of the norm together with the fact that $\Vert\Pi_{V(X_n)}\Vert_{\mathcal{L}\left(\mathcal{H}_k(\Omega),\mathcal{H}_k(\Omega)\right)} = 1$ in \eqref{eqn:theorem_s_Lip3}, \eqref{eq:repPropDeriv} in \eqref{eqn:theorem_s_Lip4}, the  triangle inequality in \eqref{eqn:theorem_s_Lip5} and the uniform Lipschitz continuity of the kernel in \eqref{eqn:theorem_s_Lip6}.
\end{proof}

Popular kernels, that will be used in the numerical experiments in \Cref{chapter_numerical_examples} are: \begin{definition}(Widely used kernels)  \label{def:three_kernels} \\
The Gaussian kernel, defined as \begin{align*}
         k(x,x'; \varepsilon) &= \exp(-\varepsilon^2\Vert x - x'\Vert^2).
    \end{align*}
The quadratic Matérn kernel, defined as \begin{align*}
        k(x,x'; \varepsilon) &= (3 + 3\varepsilon  \Vert x - x'\Vert + \varepsilon^2\Vert x - x'\Vert^2)\exp(-\varepsilon \Vert x - x'\Vert).
    \end{align*}
The Wendland kernel of second order, defined as \begin{align*}
         k(x, x'; \varepsilon) &= \frac{(l + 4)!}{l!} \max(1 -\varepsilon\Vert x-x'\Vert, 0)^{l+2} \left( (l^2+ 4l +3)\varepsilon^2\Vert x-x'\Vert^2 + (3l + 6) \varepsilon\Vert x-x'\Vert + 3\right),
    \end{align*}
where  $l= \left\lfloor \sfrac{N}{2}\right\rfloor + 3$.

\end{definition} 

Note that these three kernels are \ac{spd} (Gaussian kernel: \cite[Theorem 6.10]{wendland_2004}, quadratic Matérn kernel: Can be concluded from \cite[Theorem 4.2] {Fasshauer_2011} together with \cite[Example 5.7]{Fasshauer_2011}, Wendland kernel of second order: \cite[Theorem 9.13]{wendland_2004}).
Furthermore, as so-called radial basis function kernels, they are also translation invariant. Additionally, these three kernels are at least in the function class $C^3( \Omega \times \Omega)$ and having the first, second, and third derivatives  bounded on $\mathbb{R}^N$. This guarantees with the mean value theorem that the uniformly Lipschitz continuity of $k(x,\cdot)$ and $\partial_1^l\partial_2^l k(x,\cdot)$ are satisfied. Therefore, the conditions of Theorem \ref{theorem_P_hölder} and Theorem \ref{thm:LipschitzDerivKernel} are satisfied for these kernels. 

\section{Hermite kernel trust-region algorithm}
\label{section_TR}

In \Cref{section_standard_TR}, we introduce the \ac{tr} method in a general context. For further details - particularly regarding quadratic 
surrogate models - we refer to \cite[Chapter 6]{trust_region_methods}. In \Cref{section_assumptions_TR}, we present the set of assumptions required for the convergence analysis of our proposed algorithm. Subsequently, in 
\Cref{section_hermite_TR_kernel}, we describe the \ac{hktr} algorithm, including its underlying optimization subproblem and the definition of the \ac{agc} point, which is crucial both for the convergence analysis and the practical implementation of the algorithm. We then establish the convergence theory for the \ac{hktr} method in 
\Cref{section_convergence_theory_kernel_TR}. For simplicity, we specialize to the case \(\mathcal{P} := \mathbb{R}^p\) throughout this section and 
defer any discussion concerning subsets \(\mathcal{P} \subset \mathbb{R}^p\) to \Cref{section_constrained_optimization}.  \\

In \Cref{sect:Hermite}, we employed notation typically used in kernel methods. For the remainder of this work, however, we adopt the standard notation of optimization theory. Accordingly, we will refer to the dimension \(N\) as \(p\), the set \(\Omega\) as \(\mathcal{P}\), the function to be approximated \(f\) as the objective function \(J\), the kernel interpolant 
\(s_f^n\) as \(\hat{J}^{(i)}\), and the interpolation point set $X_n$ consisting of centers \( \{x_i \}_{i=1}^n \subset \Omega\) as $M^{(i)}$ consisting of the iterates of the optimization method $\left \{\mu^{(j)} \right\}_{j=0}^i  \subset \mathcal{P}$.

\subsection{General trust-region algorithm}
\label{section_standard_TR}

We consider a procedure for solving the optimization problem \eqref{eqn:optimization_problem} by means of a \ac{tr} algorithm. The key idea behind this approach is to approximate the objective function \(J\) in a neighborhood of the current iterate \(\mu^{(i)}\) - known as the \ac{tr} - with a surrogate model \(\hat{J}^{(i)}\). This surrogate 
is intended to allow more efficient function evaluations than the original objective function $J$. Specifically, at iteration \(i\), the \ac{tr} is defined as \begin{align} \label{eqn:tr_sphere}
B^{(i)} := \left\{ \mu \in \mathcal{P} \; \middle| \; \left\Vert\mu - \mu^{(i)} \right\Vert \leq \delta^{(i)} \right\}, 
\end{align}
where $\delta^{(i)} > 0$ is the so-called \ac{tr} radius at the $i$-th iteration. We then compute a next potential iterate $\mu^{(i+1)} \in B^{(i)}$ that should decrease the surrogate model $\hat{J}^{(i)}$ sufficiently. 
Following this step, we assess whether the decrease 
predicted by the surrogate model is actually realized by \(J\). If this is the case, the iterate $\mu^{(i+1)}$ gets accepted - otherwise, it gets rejected and the  \ac{tr} radius $\delta^{(i)}$ shrinks, reflecting the assumption that \(\hat{J}^{(i)}\) may yield more accurate predictions in a smaller region. These steps are summarized in Algorithm \ref{standard_TR_algorithm}, which we refer to as a general \ac{tr} algorithm. Note that the algorithm is understood as an abstract procedure, as it has no finite termination condition. These are provided at the end of this subsection. 

\begin{algorithm}[h]
\caption{General trust-region (TR) algorithm}\label{standard_TR_algorithm}
\KwIn{Objective function $J$, initial iterate $\mu^{(0)}$, initial \ac{tr} radius $\delta^{(0)}$, constants $\xi_1$, $\xi_2$ \& $\beta$, s.t. $0 < \xi_1 < \xi_2 <1 $, $\beta \in (0,1)$.}
Set $i:=0$. \\
Construct a surrogate model $\hat{J}^{(i)}$ on $B^{(i)}$. \label{line_algo_standard_TR_gobackto} \\
Compute the next iterate $\mu^{(i+1)} \in B^{(i)}$, s.t. the surrogate model $\hat{J}^{(i)}$ will be sufficiently decreased at this next iterate. \\
Compute \begin{align}
\label{check_sufficient_decrease_of_model}
\rho^{(i)} := \frac{J(\mu^{(i)}) - J(\mu^{(i+1)})}{\hat{J}^{(i)}(\mu^{(i)}) - \hat{J}^{(i)}(\mu^{(i+1)})}
\end{align} \\
\uIf{ $\rho^{(i)} \geq \xi_1$}{Accept $\mu^{(i+1)}$ as the next iterate.}
\uElse{Reject and set $\mu^{(i+1)} := \mu^{(i)}$.} 
Update the \ac{tr} radius according to (\ref{updating_tr_radius}). \\
$i := i+1$ and go back to line \ref{line_algo_standard_TR_gobackto}. \\
\KwOut{Sequence of iterates $\{\mu^{(i)}\}_{i \in \mathbb{N}_0}$ }
\end{algorithm} 
\vspace{10pt}
Typical values for the constants in Algorithm \ref{standard_TR_algorithm} according to \cite{trust_region_methods} are: $\xi_1 = 0.1, \xi_2 = 0.9$ and $\beta = 0.5$. If the decrease in the surrogate model $\hat{J}^{(i)}$ and the decrease in the objective function $J$ almost coincide, i.e., $\rho^{(i)} \geq \xi_2$, we trust the surrogate model and therefore expand the \ac{tr} for the next iteration, s.t. $\delta^{(i+1)} := \beta^{-1} \delta^{(i)}$. We call these iterations \textit{very successful}. If we only obtain $\rho^{(i)} \geq \xi_1$, we still accept the new iterate $\mu^{(i)}$, but we neither shrink nor expand the \ac{tr} for the next iteration. These iterations are called \textit{successful}. In the last case, i.e., $\rho^{(i)} < \xi_1$, we do not accept the iterate $\mu^{(i+1)}$, as the decrease in the surrogate model $\hat{J}^{(i)}$ was not reflected in the objective function $J$. Therefore, we shrink the \ac{tr} for the next iteration, s.t. $\delta^{(i+1)} := \beta \delta^{(i)}$. We obtain the following update scheme for the \ac{tr} radius \begin{align}
\label{updating_tr_radius}
\delta^{(i+1)} = \begin{cases}
\beta^{-1} \delta^{(i)} \quad &\textnormal{ if } \rho^{(i)} \geq \xi_2 \\
\delta^{(i)} \quad &\textnormal{ if } \rho^{(i)} \in [\xi_1, \xi_2) \\
\beta \delta^{(i)} \quad &\textnormal{ if } \rho^{(i)} < \xi_1.
\end{cases}
\end{align}

The formulation of Algorithm \ref{standard_TR_algorithm} is kept very general. For example, it is not specified how to construct the surrogate model $\hat{J}^{(i)}$. As mentioned in the introduction, we will build $\hat{J}^{(i)}$ as a linear combination of kernel translates in the proposed \ac{hktr} algorithm. Another approach that is often used in practice is a quadratic model of the form \begin{align}
\label{quadratic_model_J}
\hat{J}^{(i)}(\mu) = J(\mu^{(i)}) + \left\langle g_i, \; d^{(i)}_\mu \right\rangle + \frac{1}{2} \left\langle d^{(i)}_\mu, \hat{H}_{\hat{J}^{(i)}}(\mu^{(i)}) \; d^{(i)}_\mu \right\rangle,
\end{align}
using the abbreviations $g_i := \nabla J(\mu^{(i)})$, $d^{(i)}_\mu := \mu - \mu^{(i)}$ and $\hat{H}_{\hat{J}^{(i)}}(\mu^{(i)})$ is a symmetric approximation of the Hessian $H_J(\mu^{(i)})$. 
It is also not specified how to compute the next iterate $\mu^{(i+1)}$ in the current \ac{tr} $B^{(i)}$. We comment on that in \Cref{section_hermite_TR_kernel} in the case of the \ac{hktr} algorithm. \\

So far, the algorithm has been presented without explicit termination 
criteria, which is impractical. In a computational setting, various termination criteria may be employed, 
for example: \begin{enumerate}
\item \textit{Maximum iterations:} A maximum amount of iterations $i_{max}$ or
\item \textit{FOC condition:}  $\left\Vert\nabla J(\mu^{(i)})\right\Vert_{\infty} \leq \tau_{\textnormal{FOC}}$ for some constant $\tau_{\textnormal{FOC}} \ll 1$, i.e., $\mu^{(i)}$ is close to a \ac{foc} point $\mu^*$, meaning $\mu^*$ satisfies $\left\Vert\nabla J(\mu^*)\right\Vert = 0$ or 
\item \textit{Stagnation:} $ J_{\textnormal{diff}} \leq \tau_J$ for some constant $\tau_J \ll 1$, where \begin{align*}
J_{\textnormal{diff}} := \frac{J(\mu^{(i)}) - J(\mu^{(i+1)})}{\max \left\{J(\mu^{(i)}), J(\mu^{(i+1)}), 1 \right\}}, 
\end{align*}
i.e., the algorithm terminates if no significant improvement was achieved by the new iterate $\mu^{(i+1)}$.
\end{enumerate} 

\subsection{Assumptions on the optimization problem}
\label{section_assumptions_TR}
The convergence analysis in \Cref{section_convergence_theory_kernel_TR} 
requires assumptions on both the objective function \(J\) being optimized and on the surrogate model \(\hat{J}^{(i)}\) constructed by Algorithm~\ref{kernel_TR_algorithm}. Consequently, the present section lists the necessary assumptions on \(J\) and \(\hat{J}^{(i)}\) and explains why 
they are both required and reasonable.
 


\begin{assumption}(Assumptions on $J$ and $\hat{J}^{(i)}$)
\label{assumption_list}
\begin{enumerate}[label=(\alph*)]
    \item The surrogate model $\hat{J}^{(i)}$ is twice differentiable for all iterations $i$. 
    \textnormal{We will use the \ac{bfgs} method to solve the subproblem (\ref{suboptimzation_problem_abstract}) presented in the next section. In order for a quasi-Newton scheme to converge, $C^2(\mathcal{P})$ of the function is required. This assumption is satisfied for the Hermite kernel surrogate model $\hat{J}^{(i)}$ using any of the kernel stated in \Cref{def:three_kernels}, as they are all at least in $C^3(\mathcal{P} \times \mathcal{P})$.}
    \item The objective function $J$ is uniformly bounded away from zero, i.e, there exists $c > 0$ s.t. $J(\mu) > c > 0$ for all parameters $\mu \in \mathcal{P}$. \textnormal{This assumption is not restrictive, as the boundedness from below is a usual assumption in minimization problems for physical applications, e.g., if $J(\mu)$ is an energy function. Therefore, if a lower global bound exists for $J(\mu)$, we can add a sufficiently large constant without changing the position of its local minima and ensure its strict positivity, cf.\,\cite{Keil_2021}.}
    \item For all iterations $i$, the kernel surrogate model $\hat{J}^{(i)}(\mu)$ is uniformly (w.r.t $\mu$ and $i$) bounded away from zero, i.e., there exists $c > 0$ s.t. $\hat{J}^{(i)}(\mu) > c > 0$ for all $\mu \in \mathcal{P}, i \in \mathbb{N}$.  \textnormal{Given that $\hat{J}^{(i)}$ is designed to approximate the objective function $J$ within the current \ac{tr}, this assumption appears justified. Moreover, we note that there exist techniques which, by construction, ensure this property for the (Hermite) kernel surrogate globally, cf.\,\cite[Section 3]{hermite_kernel_interpolation}.}
    \item  We require that $J \in \mathcal{H}_k(\mathcal{P})$. \textnormal{This assumption plays a crucial role, as it enables the estimation of the upper bound on the interpolation error stated in \eqref{eqn:def:interpolation error}, which is a fundamental component of Algorithm \ref{kernel_TR_algorithm}. Certainly not every kernel will be a suitable choice for every objective function $J$.}
    \item For all iterations $i$, the kernel surrogate model $\hat{J}^{(i)}$ as well as its gradient $\nabla \hat{J}^{(i)}$ are Lipschitz continuous on the parameter space $\mathcal{P}$. \textnormal{The reason behind assuming Lipschitz continuity for $\hat{J}^{(i)}$ and $\nabla \hat{J}^{(i)}$ is to prevent abrupt changes in these functions, which could pose challenges for gradient-based optimization algorithms. The remark after \Cref{def:three_kernels} guarantees this assumption for the Gaussian, the quadratic Matérn and the Wendland kernel of second order.}
\end{enumerate}
\end{assumption}


\subsection{The optimization subproblem, the approximated generalized Cauchy point and the formulation of the Hermite kernel trust-region algorithm}
\label{section_hermite_TR_kernel}
In the proposed \ac{hktr} algorithm (Algorithm \ref{kernel_TR_algorithm}) we construct the surrogate model that aims to approximate the objective function $J$ in the current \ac{tr} using Hermite kernel interpolation as introduced in \Cref{sect:Hermite}. An important part of every \ac{tr} algorithm is the computation of the next iterate $\mu^{(i+1)}$ by minimizing the constructed surrogate model $\hat{J}^{(i)}$ in the current \ac{tr}. 
In classical \ac{tr} methods, the optimization subproblem is typically solved within a ball centered at the current iterate, defined in \eqref{eqn:tr_sphere}. This constraint reflects the assumption that the surrogate model is only reliable in a small neighborhood of $\mu^{(i)}$. \\

In the data-driven Hermite kernel interpolation framework, a feasible neighborhood can be defined in a more sophisticated way, using the upper bound on the (Hermite) kernel interpolation error stated in \eqref{eqn:def:interpolation error}. Defining for $\mu \in \mathcal{P}$ \begin{align}
\label{eq:eta_upper_bound}
    \eta^{(i)}(\mu):= \Vert f \Vert_{\mathcal{H}_k(\mathcal{P})} P_{M^{(i)}}(\mu)
\end{align}  yields the following advanced (adv) definition for the \ac{tr}:
\begin{align*}
B^{(i)}_{\textnormal{adv}} := \left\{ \mu \in \mathcal{P} \;\middle|\; \frac{\eta^{(i)}(\mu)}{\hat{J}^{(i)}(\mu)} \leq \delta^{(i)} \right\}.
\end{align*}
This formulation allows feasible points to lie anywhere in $\mathcal{P}$ as long as the (relative) upper bound on the interpolation error remains controlled, thereby potentially enlarging the \ac{tr} and allowing the surrogate to be exploited more effectively. We summarize this in the following definition.

\begin{definition}(Optimization subproblem) \\
Define the optimization subproblem as 
\begin{align}
\label{suboptimzation_problem_abstract}
\min_{\mu \in \mathcal{P}} \hat{J}^{(i)}(\mu) \textnormal{ s.t. } c^{(i)}(\mu) \geq 0.
\end{align}
For the constraint $c^{(i)}$ we pose \begin{align}
\label{suboptimzation_problem_advanced}
c^{(i)}(\mu) := \delta^{(i)} - \frac{\eta^{(i)}(\mu)}{\hat{J}^{(i)}(\mu)} =  \delta^{(i)} - \frac{P_{M^{(i)}}(\mu)\Vert J\Vert_{\mathcal{H}_k(\mathcal{P})}}{\hat{J}^{(i)}(\mu)}.
\end{align}
\end{definition}

To solve the optimization subproblem (\ref{suboptimzation_problem_abstract}) we employ a gradient descent method.  These algorithms examine the surrogate model $\hat{J}^{(i)}$ along a descent direction $p^{(i)}$ within the current \ac{tr} $B_{\textnormal{adv}}^{(i)}$. Commonly the first search direction $p^{(i)}$ is chosen as $ - \nabla J^{(i)}(\mu^{(i)})$. It seems reasonable that we obtain a good reduction of the surrogate model $\hat{J}^{(i)}$, if we move in this direction as long as the function value of the surrogate model still decreases. More formally, we want to compute the minimum of $\hat{J}^{(i)}$ by a line search (ls) along the following line: \begin{align}
\label{eqn:lsForConvergence}
\mu^{(i)}_{\textnormal{ls}} := \left\{ \mu \in B_{\textnormal{adv}}^{(i)} \;|\; \mu:= \mu^{(i)} + \alpha p^{(i)} ,\; \alpha \geq 0 \right\}. 
\end{align}
Computing the exact value $\alpha_{\textnormal{min,ls}}^{(i)}$ corresponding to the value that minimizes $\hat{J}^{(i)}$ on the line $\mu^{(i)}_{\textnormal{ls}}$ might be difficult for a general model $\hat{J}^{(i)}$. Therefore, a backtracking (bt) strategy to find a point that achieves a sufficient decrease of the surrogate model $\hat{J}^{(i)}$ is applied: Find the smallest non-negative integer $j = j^{(i)}_{\textnormal{AGC}} \in \mathbb{N}_0$ s.t. \begin{align*}
\mu^{(i)}(j) := \mu^{(i)} + \kappa_{\textnormal{bt}}^j  p^{(i)}
\end{align*}
satisfies the Armijo (arm) condition \begin{align}
\hat{J}^{(i)}(\mu^{(i)}(j)) - \hat{J}^{(i)}(\mu^{(i)}) &\leq - \kappa_{\textnormal{arm}} \left\Vert\nabla \hat{J}^{(i)}(\mu^{(i)})\right\Vert \left\Vert\mu^{(i)} - \mu^{(i)}(j)\right\Vert  \cos \Phi^{(i)}, \label{subproblem_armijo_cond} \\
c^{(i)}(\mu^{(i)}(j)) &\geq 0, \label{subproblem_constraint_cond}
\end{align}
where $\kappa_{\textnormal{bt}} \in (0,1)$ and $\kappa_{\textnormal{arm}} \in (0, 0.5)$ are given constants. Typical values according to literature are $\kappa_{\textnormal{arm}} = 10^{-4}$ and $\kappa_{\textnormal{bt}} = 0.5$, compare \cite{ct_kelly_99}. Here $\Phi^{(i)}$ denotes the angle between $ - \nabla \hat{J}^{(i)}(\mu^{(i)})$ and $p^{(i)}$. We can now define the \ac{agc} point as \begin{align*}
\mu_{\textnormal{AGC}}^{(i)} := \mu^{(i)}(j_{\textnormal{AGC}}^{(i)}).
\end{align*}
The \ac{agc} point $\mu^{(i)}_{\textnormal{AGC}} =: \mu^{(i,1)}$ defines the first successful iterate of the gradient descent method. The algorithm now proceeds using the next descent direction and again performing the Armijo backtracking search: For $l \in \mathbb{N}$ define $\mu^{(i,l+1)} := \mu^{(i,l)}(j^{(i,l)})$, where $j^{(i,l)} \in \mathbb{N}_0$ is the first non-negative integer, s.t. (\ref{subproblem_armijo_cond}) and (\ref{subproblem_constraint_cond}) hold using $\mu^{(i,l)}$ instead of $\mu^{(i)}$. As gradient descent method we utilize the \ac{bfgs} algorithm, cf. \cite{Broyden,Fletscher, Goldfarb, Shanno} in the experiments presented in Section \ref{chapter_numerical_examples}. In this case, the search direction gets updated via the \ac{bfgs} update formula. Figure \ref{figure_tr_points_explanation} visualizes the relation between the current iterate $\mu^{(i)}$, the \ac{agc} point $\mu^{(i)}_{\textnormal{AGC}}$ and the minimizer of $\hat{J}^{(i)}$ in the current \ac{tr}, which we call $\mu_{\textnormal{min}}^{(i)}$. Note that for the sake of visualization the \ac{tr} is displayed as a ball. \\
\begin{figure}[ht!]
    \centering
    \includegraphics[scale=0.8]{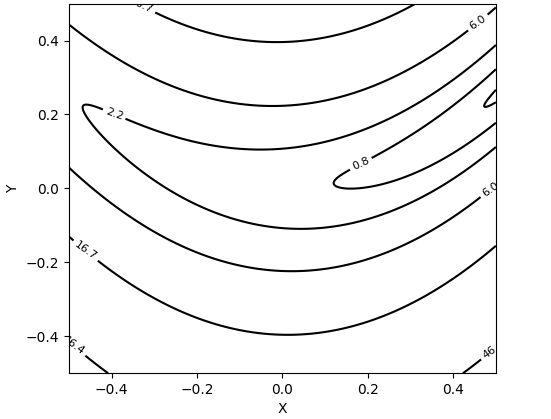}
    \begin{tikzpicture}[remember picture, overlay]
        \node[text=black] at (-7, 3.13) {$\mu^{(i)}$};
        \node[text=black] at (-6.9, 5) {$\mu_{\textnormal{AGC}}^{(i)}$};
        \node[text=black] at (-5.25, 4.45) {$\mu_{\textnormal{min}}^{(i)}$};
        \draw[black, thick] (-7, 3.5) circle (2.42cm);
        \filldraw[black] (-7, 3.5) circle (3pt);
       \filldraw[black] (-6.5, 4.6) circle (3pt);
        \filldraw[black] (-4.97, 4.8) circle (3pt);
        \draw[densely dotted, thick] (-7, 3.5) -- (-6, 5.7);
    \end{tikzpicture}
    \caption{The current iterate $\mu^{(i)}$, the approximate Cauchy point $\mu^{(i)}_{\textnormal{AGC}}$ and the model minimizer $\mu_{\textnormal{min}}^{(i)}$ on a contour plot of the Rosenbrock function defined as 
    $f(x,y) := (1-x)^2 + 100(y-x^2)^2$.}
    \label{figure_tr_points_explanation}
\end{figure} 

In a practical implementation, we propose the following termination criteria for the optimization subproblem when iterating over $l$ for a fixed $i$:
\begin{align}
\label{termination_subproblem}
\left\Vert\nabla \hat{J}^{(i)}(\mu^{(i,l)}) \right\Vert_\infty \leq \tau_{\textnormal{sub}} \quad \textnormal{ or }  \quad \beta_2 \delta^{(i)} \leq \frac{\eta^{(i)}(\mu^{(i,l)})}{\hat{J}^{(i)}(\mu^{(i,l)})} \leq \delta^{(i)}.
\end{align}
Here typically $\tau_{\textnormal{sub}}\ll 1$ and $\beta_2 \in (0,1)$, generally close to one according to \cite{Keil_2021}.  The second termination criterion avoids that excessive time is spent near the boundary of the \ac{tr} $B^{(i)}_{\textnormal{adv}}$. This is important because the kernel surrogate model $\hat{J}^{(i)}$ is likely to provide a poor approximation in that region, as discussed in \cite{Keil_2021, Qian2017ACT}. \\


To verify if a solution $\mu^{(i+1)} := \mu^{(i,l^{(i)})}$, with $l^{(i)}$ being the amount of iterates the gradient descent algorithm required to solve the subproblem,  yields a sufficient decrease of the kernel surrogate model $\hat{J}^{(i)}$, we make use of the \ac{agc} point $\mu_{\textnormal{AGC}}^{(i)}$.
This point will serve the purpose of evaluating whether a new iterate $\mu^{(i+1)}$ achieves a satisfactory reduction in the objective function $J$ and can consequently be accepted. We adapt the ideas from \citep[Section 4.2]{Keil_2021} and \cite[Section 3.2.3]{YueMeerbergen} to the Hermite kernel setting for the remainder of this section. 

\begin{condition}(Sufficient decrease condition) \\
The sufficient decrease condition for the \ac{hktr} algorithm is \begin{align}
\label{error_aware_sufficient_decrease_condition}
J(\mu^{(i+1)}) \leq \hat{J}^{(i)}(\mu_{\textnormal{AGC}}^{(i)}). 
\end{align}
\end{condition}
The underlying motivation for this condition is that the \ac{agc} point $\mu_{\textnormal{AGC}}^{(i)}$ is expected to yield a decrease in the surrogate objective $\hat{J}^{(i)}$ relative to the current iterate $\mu^{(i)}$. This decrease will be formalized and rigorously proven in \Cref{section_convergence_theory_kernel_TR}.
If (\ref{error_aware_sufficient_decrease_condition}) is satisfied, we accept $\mu^{(i+1)}$, build the next kernel surrogate model $\hat{J}^{(i+1)}$ and continue to formulate the $(i+1)$-th optimization subproblem. However, checking (\ref{error_aware_sufficient_decrease_condition}) is computationally expensive, as we have to evaluate $J(\mu^{(i+1)})$. If the check \eqref{error_aware_sufficient_decrease_condition} fails, the point $\mu^{(i+1)}$ gets rejected and we wasted computational time. To prevent this scenario from occurring, we now establish sufficient and necessary conditions for (\ref{error_aware_sufficient_decrease_condition}) that do not require the evaluation $J(\mu^{(i+1)})$. This has the potential to significantly reduce the computational cost of the \ac{hktr} algorithm.

\begin{lemma}
\label{theorem_when_to_accept_iterate}
Using the definition of the upper bound on the interpolation error from \eqref{eq:eta_upper_bound}, \begin{enumerate}
\item a sufficient condition for (\ref{error_aware_sufficient_decrease_condition}) is \begin{align}
\label{eqn_requirement_acceptance}
\hat{J}^{(i)}(\mu^{(i+1)}) + \eta^{(i)}(\mu^{(i+1)}) \leq \hat{J}^{(i)}(\mu^{(i)}_{\textnormal{AGC}}),
\end{align}
\item a necessary condition for (\ref{error_aware_sufficient_decrease_condition}) is 
\begin{align}
\label{eqn_requirement_rejection}
\hat{J}^{(i)}(\mu^{(i+1)}) - \eta^{(i)}(\mu^{(i+1)}) \leq \hat{J}^{(i)}(\mu^{(i)}_{\textnormal{AGC}}).
\end{align}
\end{enumerate}
\end{lemma}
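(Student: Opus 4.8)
The plan is to exploit nothing more than the pointwise Hermite interpolation error bound \eqref{eqn:def:interpolation error}. Under Assumption~\ref{assumption_list}~(d) we have $J \in \mathcal{H}_k(\mathcal{P})$, so $\Vert J \Vert_{\mathcal{H}_k(\mathcal{P})} < \infty$ and, with the notational identifications of \Cref{sect:Hermite} ($f \leftrightarrow J$, $\Omega \leftrightarrow \mathcal{P}$, $X_n \leftrightarrow M^{(i)}$, $s_f^n = \Pi_{V(M^{(i)})}J \leftrightarrow \hat{J}^{(i)}$) together with the definition \eqref{eq:eta_upper_bound} of $\eta^{(i)}$, estimate \eqref{eqn:def:interpolation error} for the multi-index $a=0$ rewrites as the two-sided bound
\begin{align*}
\hat{J}^{(i)}(\mu) - \eta^{(i)}(\mu) \;\leq\; J(\mu) \;\leq\; \hat{J}^{(i)}(\mu) + \eta^{(i)}(\mu) \qquad \text{for all } \mu \in \mathcal{P}.
\end{align*}
Both claims then follow by chaining one side of this inequality, evaluated at $\mu = \mu^{(i+1)}$, with the respective hypothesis.

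For part~1, I would assume \eqref{eqn_requirement_acceptance} and use the upper half of the displayed bound at $\mu=\mu^{(i+1)}$ to write $J(\mu^{(i+1)}) \leq \hat{J}^{(i)}(\mu^{(i+1)}) + \eta^{(i)}(\mu^{(i+1)})$; combining this with the right-hand side of \eqref{eqn_requirement_acceptance} immediately yields $J(\mu^{(i+1)}) \leq \hat{J}^{(i)}(\mu_{\textnormal{AGC}}^{(i)})$, which is exactly \eqref{error_aware_sufficient_decrease_condition}.

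For part~2, I would assume that \eqref{error_aware_sufficient_decrease_condition} holds and use the lower half of the displayed bound at $\mu=\mu^{(i+1)}$, i.e.\ $\hat{J}^{(i)}(\mu^{(i+1)}) - \eta^{(i)}(\mu^{(i+1)}) \leq J(\mu^{(i+1)})$; chaining this with \eqref{error_aware_sufficient_decrease_condition} gives $\hat{J}^{(i)}(\mu^{(i+1)}) - \eta^{(i)}(\mu^{(i+1)}) \leq \hat{J}^{(i)}(\mu_{\textnormal{AGC}}^{(i)})$, which is \eqref{eqn_requirement_rejection}.

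There is no genuinely hard step here: the only point requiring care is the bookkeeping of the notational change from the kernel-method conventions of \Cref{sect:Hermite} to the optimization conventions, and making explicit that it is precisely Assumption~\ref{assumption_list}~(d) that legitimizes the use of \eqref{eqn:def:interpolation error} (finiteness of the \ac{rkhs} norm of $J$ and the identification $\hat{J}^{(i)} = \Pi_{V(M^{(i)})}J$). Everything else is two one-line applications of the interpolation error estimate.
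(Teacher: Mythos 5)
Your proposal is correct and follows essentially the same route as the paper's proof: both rest on the two-sided bound $|J(\mu)-\hat{J}^{(i)}(\mu)|\leq \eta^{(i)}(\mu)$ coming from the pointwise interpolation error estimate \eqref{eqn:def:interpolation error}, and then chain the appropriate half of that bound with the respective hypothesis. You are in fact slightly more explicit than the paper about where the bound comes from (the $a=0$ case of \eqref{eqn:def:interpolation error} together with Assumption \ref{assumption_list}~(d)), which is a welcome clarification rather than a deviation.
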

\begin{proof}
Sufficient condition: Using the definition of $\eta^{(i)}$ in \eqref{eq:eta_upper_bound} yields \begin{align*}
     J(\mu^{(i+1)}) \leq  \hat{J}^{(i)}(\mu^{(i+1)}) + \eta^{(i)}(\mu^{(i+1)}).
\end{align*}
by utilizing the inverse triangle inequality. Therefore, \eqref{eqn_requirement_acceptance} implies \eqref{error_aware_sufficient_decrease_condition}. Necessary condition: Similarly, it holds \begin{align*}
 J(\mu^{(i+1)}) \geq \hat{J}^{(i)}(\mu^{(i+1)}) - \eta^{(i)}(\mu^{(i+1)}).
\end{align*}
Therefore, if \eqref{error_aware_sufficient_decrease_condition} holds, also \eqref{eqn_requirement_rejection} is satisfied. 
\end{proof}

Based on these considerations, we propose the following computational procedure instead of directly checking (\ref{error_aware_sufficient_decrease_condition}): \begin{enumerate}
\item Check (\ref{eqn_requirement_acceptance}). If the condition holds, we accept $\mu^{(i+1)}$ as the next iterate. 
\item If (\ref{eqn_requirement_acceptance}) fails, we check (\ref{eqn_requirement_rejection}). If this condition holds, we reject the proposed iterate $\mu^{(i+1)}$ and solve the optimization subproblem (\ref{suboptimzation_problem_abstract}) again, using a shrinked \ac{tr} radius $\delta^{(i)}$.
\item If neither (\ref{eqn_requirement_acceptance}) nor (\ref{eqn_requirement_rejection}) hold, we have to check (\ref{error_aware_sufficient_decrease_condition}) directly. This is computationally expensive and we try to avoid this case if possible. 
\end{enumerate}
These three cases are reflected in lines 4, 7 and 11 of the proposed \ac{hktr} algorithm (Algorithm \ref{kernel_TR_algorithm}) where all the implementation details discussed so far are comprised.

\begin{algorithm}[ht!]
\caption{Hermite kernel trust-region (HKTR) algorithm}\label{kernel_TR_algorithm}
\KwIn{Objective function $J$, initial iterate $\mu^{(0)}$, initial \ac{tr} radius $\delta^{(0)}$, maximum iterations of the \ac{tr} algorithm $i_{\textnormal{max}}$, stopping tolerances for the optimization subproblem $\tau_{\textnormal{sub}}$ and $\tau_J$, maximum iterations for the optimization subproblem $l_{\textnormal{max}}$, backtracking step $\kappa_{\textnormal{bt}}$, Armijo constant $\kappa_{\textnormal{arm}}$, \ac{foc} tolerance $\tau_{\textnormal{FOC}}$, \ac{tr} shrinking factor $\beta_1$, safeguard for the \ac{tr} boundary condition $\beta_2$, tolerance for enlarging the \ac{tr} radius $\xi$.}
Set $i:=0$ and $LoopFlag := True$. \\
\While{$i \leq i_{\textnormal{max}}$ and LoopFlag = True}{
	Compute $\mu^{(i+1)}$ as solution of the optimization subproblem (\ref{suboptimzation_problem_abstract}) using \ac{bfgs} with the termination criteria specified in (\ref{termination_subproblem}). This algorithm also returns $\mu^{(i)}_{\textnormal{AGC}}$ as its first successful iterate. \label{go_back_to_optim_subproblem} \\
	\uIf{$\hat{J}^{(i)}(\mu^{(i+1)}) + \eta^{(i)}(\mu^{(i+1)}) \leq \hat{J}^{(i)}(\mu^{(i)}_{\textnormal{AGC}})$}{
		Accept $\mu^{(i+1)}$, build the new kernel surrogate model $\hat{J}^{(i+1)}$ around $\mu^{(i+1)}$. \\
		Compute $\rho^{(i)}$ according to (\ref{check_sufficient_decrease_of_model}) and update the \ac{tr} radius according to \eqref{updating_tr_radius}.
	}
	\uElseIf{$\hat{J}^{(i)}(\mu^{(i+1)}) - \eta^{(i)}(\mu^{(i+1)}) > \hat{J}^{(i)}(\mu^{(i)}_{\textnormal{AGC}})$} {
		Reject the new iterate $\mu^{(i+1)}$, shrink the \ac{tr} radius: $\delta^{(i)} := \beta_1 \delta^{(i)}$ and go back to line \ref{go_back_to_optim_subproblem} without increasing $i$.
	}	
	\Else{
		Evaluate $J(\mu^{(i+1)})$, $\nabla J (\mu^{(i+1)})$ and build the new kernel surrogate model $\hat{J}^{(i+1)}$ including the data for $\mu^{(i+1)}$. \\
		\uIf{$J(\mu^{(i+1)}) \leq \hat{J}^{(i)}(\mu^{(i)}_{\textnormal{AGC}})$ \label{line_kernel_algo_sufficient_decrease_check}}{
			Accept $\mu^{(i+1)}$. \\
			Compute $\rho^{(i)}$ according to (\ref{check_sufficient_decrease_of_model}) and update the \ac{tr} radius according to \eqref{updating_tr_radius}.
		}
		\Else{
			Reject $\mu^{(i+1)}$, set $\hat{J}^{(i)} := \hat{J}^{(i+1)}$ (i.e., keep the updated model) shrink the \ac{tr} radius: $\delta^{(i)} := \beta_1 \delta^{(i)}$ and go back to line \ref{go_back_to_optim_subproblem} without increasing $i$.
		}
	}
	\If{$\left\Vert\nabla \hat{J}^{(i+1)}(\mu^{(i+1)})\right\Vert_\infty \leq \tau_{\textnormal{FOC}}$ or $\hat{J}^{(i)}_{\textnormal{diff}} \leq \tau_J$}{
		$LoopFlag := False$. 
	}
	$i := i + 1$.
}
\KwOut{Sequence of iterates $\left\{ \mu^{(i)} \right\}$, sequence of function values $\left\{ J(\mu^{(i)}) \right\}$,  sequence of \ac{foc} conditions $\left\{ \left\Vert \nabla J(\mu^{(i)})\right\Vert_\infty \right\}$.}
\end{algorithm}


\subsection{Convergence analysis}
\label{section_convergence_theory_kernel_TR}
To establish convergence of the kernel \ac{tr} algorithm 
(Algorithm~\ref{kernel_TR_algorithm}), it is assumed that the algorithm 
generates an infinite sequence of iterates 
$\left\{\mu^{(i)}\right\}_{i \in \mathbb{N}_0}$. 
In Theorem~\ref{auxilary_statements_from_YM}, a lower bound is derived for 
the decrease in \(\hat{J}^{(i)}\) achieved by \(\mu^{(i)}_{\textnormal{AGC}}\). 
A key assumption in the proof of Theorem~\ref{auxilary_statements_from_YM} 
is the Hölder continuity of \(c^{(i)}\), defined in the constraint of the 
subproblem \eqref{suboptimzation_problem_abstract}, with Hölder exponent 
$\alpha_{\textnormal{Höl}} = \sfrac{1}{2}$. The following theorem demonstrates 
that this requirement is satisfied for \(c^{(i)}\) as defined in 
\eqref{suboptimzation_problem_advanced}.

\begin{lemma}
\label{lemma:c_hölder}
Let the conditions of Theorem \ref{theorem_P_hölder} be satisfied. Then $c^{(i)}$, defined in (\ref{suboptimzation_problem_advanced}) as \begin{align*}
c^{(i)}(\mu) = \delta^{(i)} - \frac{\eta^{(i)}(\mu)}{\hat{J}^{(i)}(\mu)} = \delta^{(i)} - \frac{P_{M^{(i)}}(\mu) \Vert J \Vert_{\mathcal{H}_k(\mathcal{P})}}{\hat{J}^{(i)}(\mu)}, 
\end{align*}
is Hölder continuous with the Hölder exponent $\alpha_{\textnormal{Höl}} = \sfrac{1}{2}$, i.e., it exists $C_c^{(i)} \geq 0$ s.t. \begin{align*}
\left|c^{(i)}(\mu) - c^{(i)}(\tilde{\mu})\right| \leq C_c^{(i)} \Vert\mu - \tilde{\mu}\Vert^{\frac{1}{2}} \quad \forall \; \mu, \tilde{\mu} \in \mathcal{P}.
\end{align*} 
\end{lemma}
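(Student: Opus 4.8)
The plan is to reduce the claim to two facts already available: the Hölder continuity of the Power function from \Cref{theorem_P_hölder} and the Lipschitz continuity of the surrogate $\hat J^{(i)}$ from \Cref{assumption_list}(e), combined with the lower bound $\hat J^{(i)}\ge c>0$ from \Cref{assumption_list}(c). Since $\delta^{(i)}$ is a constant it cancels in any difference, so writing $P:=P_{M^{(i)}}$ and $\hat J:=\hat J^{(i)}$ we have
\begin{align*}
c^{(i)}(\mu)-c^{(i)}(\tilde\mu)=\Vert J\Vert_{\mathcal{H}_k(\mathcal{P})}\left(\frac{P(\tilde\mu)}{\hat J(\tilde\mu)}-\frac{P(\mu)}{\hat J(\mu)}\right).
\end{align*}
The first step is the standard add-and-subtract (quotient-rule) splitting
\begin{align*}
\frac{P(\tilde\mu)}{\hat J(\tilde\mu)}-\frac{P(\mu)}{\hat J(\mu)}=\frac{P(\tilde\mu)-P(\mu)}{\hat J(\tilde\mu)}+P(\mu)\,\frac{\hat J(\mu)-\hat J(\tilde\mu)}{\hat J(\mu)\hat J(\tilde\mu)},
\end{align*}
from which, using $\hat J\ge c>0$ to bound the denominators, one obtains
\begin{align*}
\left|c^{(i)}(\mu)-c^{(i)}(\tilde\mu)\right|\le \Vert J\Vert_{\mathcal{H}_k(\mathcal{P})}\left(\frac{\left|P(\mu)-P(\tilde\mu)\right|}{c}+\frac{\left|P(\mu)\right|\left|\hat J(\mu)-\hat J(\tilde\mu)\right|}{c^{2}}\right).
\end{align*}

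The second step is to estimate the two remaining pieces. For the first term, \Cref{theorem_P_hölder} (whose hypotheses are assumed) gives directly $\left|P(\mu)-P(\tilde\mu)\right|\le 4\sqrt{C_k}\Vert\mu-\tilde\mu\Vert^{1/2}$. For the second term, \Cref{assumption_list}(e) gives $\left|\hat J(\mu)-\hat J(\tilde\mu)\right|\le L_{\hat J^{(i)}}\Vert\mu-\tilde\mu\Vert$ with $L_{\hat J^{(i)}}$ the Lipschitz constant of $\hat J^{(i)}$, and it remains to bound $\left|P(\mu)\right|$ uniformly in $\mu$. Here I would use that the orthogonal projection is norm-reducing, so by \Cref{def:hermite_power_function} and the reproducing property $P_{M^{(i)}}(\mu)\le \Vert k(\mu,\cdot)\Vert_{\mathcal{H}_k(\mathcal{P})}=\sqrt{k(\mu,\mu)}$; for the translation-invariant kernels considered (\Cref{def:three_kernels}) this equals $\sqrt{\phi(0)}=:\bar P$, a constant independent of $\mu$ and of $M^{(i)}$ (more generally one only needs $\sup_{\mu\in\mathcal{P}}k(\mu,\mu)<\infty$).

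The final step is to reconcile the two Hölder exponents on the unbounded domain $\mathcal{P}=\mathbb{R}^p$. For $\Vert\mu-\tilde\mu\Vert\le 1$ one has $\Vert\mu-\tilde\mu\Vert\le\Vert\mu-\tilde\mu\Vert^{1/2}$, so the Lipschitz term is also controlled by a $\tfrac12$-Hölder term and the estimates combine to
\begin{align*}
\left|c^{(i)}(\mu)-c^{(i)}(\tilde\mu)\right|\le \Vert J\Vert_{\mathcal{H}_k(\mathcal{P})}\left(\frac{4\sqrt{C_k}}{c}+\frac{\bar P\,L_{\hat J^{(i)}}}{c^{2}}\right)\Vert\mu-\tilde\mu\Vert^{1/2}.
\end{align*}
For $\Vert\mu-\tilde\mu\Vert>1$ I would instead exploit that $0\le P/\hat J\le \bar P/c$, so $\left|c^{(i)}(\mu)-c^{(i)}(\tilde\mu)\right|\le \Vert J\Vert_{\mathcal{H}_k(\mathcal{P})}\bar P/c\le \Vert J\Vert_{\mathcal{H}_k(\mathcal{P})}(\bar P/c)\Vert\mu-\tilde\mu\Vert^{1/2}$, and then set $C_c^{(i)}$ equal to the maximum of the two constants. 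The only genuinely delicate point is the uniform bound on $P_{M^{(i)}}$ that is needed to tame the Lipschitz contribution on an unbounded $\mathcal{P}$ — this is exactly where translation invariance of the kernel (or boundedness of $k$ on the diagonal) is used; everything else is a routine quotient estimate.
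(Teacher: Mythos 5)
Your proof is correct and follows the same route as the paper, whose own proof simply asserts that the quotient of the Hölder-continuous Power function by the Lipschitz, uniformly positive surrogate is again Hölder continuous with exponent $\sfrac{1}{2}$. In fact your version is more complete than the paper's one-line argument: you supply the two details it leaves implicit, namely the uniform bound $P_{M^{(i)}}(\mu)\le\sqrt{k(\mu,\mu)}=\sqrt{\phi(0)}$ needed to control the Lipschitz contribution of $\hat{J}^{(i)}$, and the case split $\Vert\mu-\tilde{\mu}\Vert\le 1$ versus $\Vert\mu-\tilde{\mu}\Vert> 1$ needed to convert that Lipschitz term into a $\sfrac{1}{2}$-Hölder bound on the unbounded domain $\mathcal{P}=\mathbb{R}^p$.
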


\begin{proof}
 Theorem \ref{theorem_P_hölder} states that $P_{M^{(i)}}$ is Hölder continuous with $\alpha_{\textnormal{Höl}} = \sfrac{1}{2}$. According to Assumption \ref{assumption_list} c) and e), $\hat{J}^{(i)}$ is uniformly bounded away from zero and Lipschitz continuous. Thus, the fraction \begin{align*}
\frac{P_{M^{(i)}}(\mu)}{\hat{J}^{(i)}(\mu)}
\end{align*} is also Hölder continuous with $\alpha_{\textnormal{Höl}} = \sfrac{1}{2}$. Since $\Vert J \Vert_{\mathcal{H}_k(\mathcal{P})}$ as well as $\delta^{(i)}$ are constant values, we obtain the desired result. \end{proof}

The next theorem is based on \citep[Theorem 3.2]{YueMeerbergen} .
The theorem states a lower bound for the decrease in $\hat{J}^{(i)}$ achieved by the \ac{agc} point $\mu^{(i)}_{\textnormal{AGC}}$ and is the key result in the convergence analysis. 

\begin{theorem}
\label{auxilary_statements_from_YM}
Let the assumptions of Lemma \ref{lemma:c_hölder} be satisfied, s.t. $c^{(i)}$ is Hölder continuous with exponent $\alpha_{\textnormal{Höl}} = \sfrac{1}{2}$ and Hölder constant $C_c^{(i)} > 0$.
Further, let Assumption \ref{assumption_list} e)  hold, i.e., $\nabla \hat{J}^{(i)}$ is Lipschitz continuous, so there exists $ C_{\nabla \hat{J}}^{(i)} > 0$ s.t. \begin{align*}
\left\Vert\nabla \hat{J}^{(i)}(\mu) - \nabla \hat{J}^{(i)}(\tilde{\mu})\right\Vert \leq C_{\nabla \hat{J}}^{(i)} \Vert \mu - \tilde{\mu}\Vert \quad \forall \; \mu, \tilde{\mu} \in \mathcal{P}.
\end{align*}
Let furthermore $\Phi^{(i)} < \frac{\pi}{2}$, $\kappa_{\textnormal{arm}} \in (0,1)$, $c^{(i)}(\mu^{(i)}) > 0$. Then, we obtain the following result: 
A lower bound for the decrease in $\hat{J}^{(i)}$ achieved by the \ac{agc} point $\mu_{\textnormal{AGC}}^{(i)}$ is given by \begin{align}
\label{theorem3.2_eqn315}
\hat{J}^{(i)} (\mu^{(i)}) - \hat{J}^{(i)} (\mu_{\textnormal{AGC}}^{(i)}) \geq \left(\kappa_{\textnormal{arm}} \cos \Phi^{(i)} \right) \left\Vert\nabla \hat{J}^{(i)} (\mu^{(i)}) \right\Vert \min \left\{ \kappa_{\nabla \hat{J}}^{(i)} \left\Vert\nabla \hat{J}^{(i)} (\mu^{(i)})\right\Vert, \kappa_{\textnormal{bt}} \frac{\left(c^{(i)}(\mu^{(i)})\right)^2}{\left(C_c^{(i)} \right)^2} \right\},
\end{align}
where $\kappa_{\nabla \hat{J}}^{(i)} := \min \left\{ 1, \frac{\kappa_{\textnormal{bt}}(1 - \kappa_{\textnormal{arm}})\cos \Phi^{(i)}}{C_{\nabla \hat{J}}^{(i)}} \right\}$.
\end{theorem}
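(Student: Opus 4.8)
The statement is an analogue of the classical Cauchy-decrease estimate for trust-region methods, adapted to the nonstandard constraint region $B^{(i)}_{\textnormal{adv}}$ whose boundary is governed by the Hölder-continuous function $c^{(i)}$. I would follow the standard two-case backtracking analysis: either the Armijo condition \eqref{subproblem_armijo_cond} is satisfied at the first trial step length ($j_{\textnormal{AGC}}^{(i)} = 0$, i.e.\ step size $1$), or backtracking genuinely occurred and the step immediately preceding the accepted one \emph{failed} either the Armijo test or the feasibility test \eqref{subproblem_constraint_cond}. Throughout, set $p^{(i)} = -\nabla\hat J^{(i)}(\mu^{(i)})$ is not assumed, only that $\Phi^{(i)} < \pi/2$, so $\cos\Phi^{(i)} > 0$; I keep the generic descent direction.

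\textbf{Step 1 (full step accepted).} If $j_{\textnormal{AGC}}^{(i)} = 0$, then $\mu^{(i)}_{\textnormal{AGC}} = \mu^{(i)} + p^{(i)}$ and the Armijo inequality \eqref{subproblem_armijo_cond} gives directly
\[
\hat J^{(i)}(\mu^{(i)}) - \hat J^{(i)}(\mu^{(i)}_{\textnormal{AGC}}) \;\ge\; \kappa_{\textnormal{arm}}\cos\Phi^{(i)}\,\bigl\|\nabla\hat J^{(i)}(\mu^{(i)})\bigr\|\,\bigl\|p^{(i)}\bigr\|,
\]
and since $\|p^{(i)}\|$ can be taken $\ge$ something proportional to $\|\nabla\hat J^{(i)}(\mu^{(i)})\|$ (or is exactly that if $p^{(i)}=-\nabla\hat J^{(i)}$), this already dominates the first term $\kappa_{\nabla\hat J}^{(i)}\|\nabla\hat J^{(i)}(\mu^{(i)})\|$ inside the $\min$, up to constants. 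So the nontrivial case is backtracking.

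\textbf{Step 2 (backtracking occurred).} Here $j := j_{\textnormal{AGC}}^{(i)} - 1 \ge 0$ is the last rejected index and $\tilde\mu := \mu^{(i)} + \kappa_{\textnormal{bt}}^{j} p^{(i)}$ violated \eqref{subproblem_armijo_cond} or \eqref{subproblem_constraint_cond}. Write $t := \kappa_{\textnormal{bt}}^{j}$, so the accepted step length is $\kappa_{\textnormal{bt}} t$.

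\emph{Case 2a: Armijo violated at $\tilde\mu$.} Then $\hat J^{(i)}(\tilde\mu) - \hat J^{(i)}(\mu^{(i)}) > -\kappa_{\textnormal{arm}} \|\nabla\hat J^{(i)}(\mu^{(i)})\|\,t\|p^{(i)}\|\cos\Phi^{(i)}$. Combine this with a first-order Taylor expansion of $\hat J^{(i)}$ along $p^{(i)}$ using the Lipschitz bound $C_{\nabla\hat J}^{(i)}$ on $\nabla\hat J^{(i)}$ (the standard argument: $\hat J^{(i)}(\tilde\mu) - \hat J^{(i)}(\mu^{(i)}) \le t\langle\nabla\hat J^{(i)}(\mu^{(i)}), p^{(i)}\rangle + \tfrac12 C_{\nabla\hat J}^{(i)} t^2\|p^{(i)}\|^2$ and $\langle\nabla\hat J^{(i)}(\mu^{(i)}), p^{(i)}\rangle = -\|\nabla\hat J^{(i)}(\mu^{(i)})\|\,\|p^{(i)}\|\cos\Phi^{(i)}$). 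Solving for $t$ yields a lower bound $t \ge \kappa_{\textnormal{bt}}^{-1}\cdot$ (something) $\ge \frac{\kappa_{\textnormal{bt}}(1-\kappa_{\textnormal{arm}})\cos\Phi^{(i)}}{C_{\nabla\hat J}^{(i)}} \cdot \frac{\|\nabla\hat J^{(i)}(\mu^{(i)})\|}{\|p^{(i)}\|}$, which after substituting back into the accepted-step Armijo inequality $\hat J^{(i)}(\mu^{(i)}) - \hat J^{(i)}(\mu^{(i)}_{\textnormal{AGC}}) \ge \kappa_{\textnormal{arm}}\cos\Phi^{(i)}\|\nabla\hat J^{(i)}(\mu^{(i)})\|\,\kappa_{\textnormal{bt}} t\|p^{(i)}\|$ produces the term $\kappa_{\textnormal{arm}}\cos\Phi^{(i)}\,\kappa_{\nabla\hat J}^{(i)}\|\nabla\hat J^{(i)}(\mu^{(i)})\|^2$, matching the first branch of the $\min$.

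\emph{Case 2b: feasibility violated at $\tilde\mu$.} Then $c^{(i)}(\tilde\mu) < 0$, while $c^{(i)}(\mu^{(i)}) > 0$ by hypothesis, so by Hölder continuity of $c^{(i)}$ (Lemma~\ref{lemma:c_hölder}),
\[
c^{(i)}(\mu^{(i)}) \;=\; c^{(i)}(\mu^{(i)}) - c^{(i)}(\tilde\mu) + c^{(i)}(\tilde\mu) \;<\; C_c^{(i)}\,\|\mu^{(i)} - \tilde\mu\|^{1/2} \;=\; C_c^{(i)}\,(t\|p^{(i)}\|)^{1/2},
\]
hence $t\|p^{(i)}\| > \bigl(c^{(i)}(\mu^{(i)})/C_c^{(i)}\bigr)^2$. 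Substituting into the accepted-step Armijo decrease $\hat J^{(i)}(\mu^{(i)}) - \hat J^{(i)}(\mu^{(i)}_{\textnormal{AGC)}}) \ge \kappa_{\textnormal{arm}}\cos\Phi^{(i)}\|\nabla\hat J^{(i)}(\mu^{(i)})\|\,\kappa_{\textnormal{bt}}\, t\|p^{(i)}\|$ yields the second branch $\kappa_{\textnormal{arm}}\cos\Phi^{(i)}\|\nabla\hat J^{(i)}(\mu^{(i)})\|\,\kappa_{\textnormal{bt}}\bigl(c^{(i)}(\mu^{(i)})\bigr)^2/\bigl(C_c^{(i)}\bigr)^2$.

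\textbf{Step 3 (assemble).} In every case the decrease is at least $\kappa_{\textnormal{arm}}\cos\Phi^{(i)}\|\nabla\hat J^{(i)}(\mu^{(i)})\|$ times one of the two quantities in the $\min$, so taking the minimum over the cases gives \eqref{theorem3.2_eqn315}.

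\textbf{Main obstacle.} The delicate point is the feasibility case 2b: the Hölder exponent $1/2$ is exactly what makes the $\bigl(c^{(i)}(\mu^{(i)})\bigr)^2/\bigl(C_c^{(i)}\bigr)^2$ term (rather than a first power) appear, and one must be careful that the backtracking genuinely reaches a rejected step — i.e.\ handle the edge case where feasibility already fails at step length $1$ versus where the accepted iterate $\mu^{(i)}_{\textnormal{AGC}}$ is the very first trial. A secondary subtlety is bookkeeping the relation between $\|p^{(i)}\|$ and $\|\nabla\hat J^{(i)}(\mu^{(i)})\|$ consistently so that the $\|p^{(i)}\|$ factors cancel cleanly and the bound depends only on $\|\nabla\hat J^{(i)}(\mu^{(i)})\|$; this is routine if $p^{(i)} = -\nabla\hat J^{(i)}(\mu^{(i)})$ but needs the angle condition otherwise.
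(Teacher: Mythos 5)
Your proposal is correct and follows essentially the same architecture as the paper's proof: lower-bound the accepted backtracking step length by arguing that the last rejected trial point must have violated either the Armijo condition (yielding the $\kappa_{\nabla\hat J}^{(i)}\|\nabla\hat J^{(i)}(\mu^{(i)})\|$ branch via Lipschitz continuity of the gradient) or the feasibility constraint (yielding the $(c^{(i)}(\mu^{(i)}))^2/(C_c^{(i)})^2$ branch via H\"older continuity with exponent $\sfrac{1}{2}$), then substitute into the Armijo decrease at the accepted point. The only cosmetic difference is that you obtain the Armijo step-length threshold from the standard descent-lemma Taylor bound, whereas the paper locates a point where the directional derivative equals the Armijo slope via the intermediate value theorem and bounds its distance using the mean value theorem; both yield the required estimate, and the paper additionally fixes the normalization $\|p^{(i)}\| = \|\nabla\hat J^{(i)}(\mu^{(i)})\|$ to resolve the bookkeeping subtlety you flag at the end.
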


\begin{proof}
    See Appendix \ref{appendix_proof_YM}.
\end{proof}

The following theorem is an adapted version of \citep[Theorem 3.3]{YueMeerbergen}. It shows the convergence of the objective function $J$, if the sufficient decrease condition (\ref{error_aware_sufficient_decrease_condition}) is satisfied for all iterations $i$. Furthermore, it requires uniformity in $i$ for the Hölder and Lipschitz constants defined in Theorem \ref{auxilary_statements_from_YM}, which is satisfied, as the Hölder constant in Theorem \ref{theorem_P_hölder} and the Lipschitz constant in Theorem \ref{thm:LipschitzDerivKernel} are independent of the number of interpolation points, i.e., independent of the iteration $i$.  

\begin{theorem}
\label{theorem_convergence_kernel_tr}
Assume that all conditions of Theorem \ref{auxilary_statements_from_YM} hold and $\Phi^{(i)} \leq \Phi < \frac{\pi}{2}$, $c^{(i)}(\mu^{(i)}) \geq c_l > 0$, $ 0 < C_c^{(i)} \leq C_c$, $0 < C_{\nabla \hat{J}}^{(i)} < C_{\nabla \hat{J}}$ for all $i$. Then, if the sufficient decrease condition (\ref{error_aware_sufficient_decrease_condition}) holds for all iterations $i$, we get: \begin{align*}
\lim_{i \rightarrow \infty} \left\Vert\nabla J(\mu^{(i)}) \right\Vert = 0.
\end{align*}
\end{theorem}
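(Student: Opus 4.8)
The plan is to follow the classical trust-region convergence argument, adapted to the Hermite kernel setting, in the spirit of \citep[Theorem 3.3]{YueMeerbergen}. First I would record the consequences of the sufficient decrease condition: since \eqref{error_aware_sufficient_decrease_condition} holds at every iteration, we have $J(\mu^{(i+1)}) \leq \hat J^{(i)}(\mu^{(i)}_{\textnormal{AGC}})$, and because $\hat J^{(i)}$ interpolates $J$ at $\mu^{(i)} \in M^{(i)}$ we also have $\hat J^{(i)}(\mu^{(i)}) = J(\mu^{(i)})$. Combining these with the lower bound on the \ac{agc} decrease from Theorem~\ref{auxilary_statements_from_YM} gives
\begin{align*}
J(\mu^{(i)}) - J(\mu^{(i+1)}) \;\geq\; \hat J^{(i)}(\mu^{(i)}) - \hat J^{(i)}(\mu^{(i)}_{\textnormal{AGC}}) \;\geq\; \left(\kappa_{\textnormal{arm}} \cos\Phi\right)\left\Vert \nabla \hat J^{(i)}(\mu^{(i)})\right\Vert \min\left\{ \kappa_{\nabla\hat J}\left\Vert\nabla\hat J^{(i)}(\mu^{(i)})\right\Vert, \; \kappa_{\textnormal{bt}}\frac{c_l^2}{C_c^2}\right\},
\end{align*}
where, using the uniform bounds $\Phi^{(i)}\leq\Phi$, $c^{(i)}(\mu^{(i)})\geq c_l$, $C_c^{(i)}\leq C_c$, $C_{\nabla\hat J}^{(i)}<C_{\nabla\hat J}$, the constant $\kappa_{\nabla\hat J} := \min\{1, \kappa_{\textnormal{bt}}(1-\kappa_{\textnormal{arm}})\cos\Phi / C_{\nabla\hat J}\}$ is a fixed positive number independent of $i$, and likewise the right-hand side prefactors are uniform in $i$.

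Next I would use the telescoping/summability argument. The sequence $\{J(\mu^{(i)})\}$ is non-increasing by the above, and by Assumption~\ref{assumption_list}~(b) it is bounded below by $c>0$; hence it converges, and $\sum_{i=0}^{\infty}\bigl(J(\mu^{(i)}) - J(\mu^{(i+1)})\bigr) < \infty$, which forces the right-hand side of the displayed inequality to tend to zero as $i\to\infty$. It remains to deduce $\Vert\nabla J(\mu^{(i)})\Vert\to 0$ from $\Vert\nabla\hat J^{(i)}(\mu^{(i)})\Vert\,\min\{\kappa_{\nabla\hat J}\Vert\nabla\hat J^{(i)}(\mu^{(i)})\Vert, \kappa_{\textnormal{bt}}c_l^2/C_c^2\}\to 0$. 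Since the second argument of the minimum is a fixed positive constant, this product going to zero is equivalent to $\Vert\nabla\hat J^{(i)}(\mu^{(i)})\Vert\to 0$ (argue by contradiction along a subsequence bounded away from zero). Finally, transferring this to the true gradient uses the Hermite interpolation gradient error bound: since $\mu^{(i)}\in M^{(i)}$ is an interpolation point, the Power function $P^a_{M^{(i)}}(\mu^{(i)})=0$ for every multi-index $a$, so by the gradient error estimate following \eqref{eqn:def:interpolation error} we get $\nabla\hat J^{(i)}(\mu^{(i)}) = \nabla J(\mu^{(i)})$ exactly; therefore $\Vert\nabla J(\mu^{(i)})\Vert = \Vert\nabla\hat J^{(i)}(\mu^{(i)})\Vert \to 0$, which is the claim.

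The main obstacle I anticipate is making the "uniformity in $i$" bookkeeping fully rigorous: Theorem~\ref{auxilary_statements_from_YM} is stated per-iteration with constants $C_c^{(i)}, C_{\nabla\hat J}^{(i)}, \Phi^{(i)}$, and one must check that the hypotheses of that theorem — in particular $c^{(i)}(\mu^{(i)})>0$ and $\Phi^{(i)}<\pi/2$ — hold at every iteration generated by Algorithm~\ref{kernel_TR_algorithm}, and that the quoted uniform upper bounds $C_c, C_{\nabla\hat J}$ and lower bound $c_l$ exist. The paper addresses most of this via the remark that the Hölder constant of Theorem~\ref{theorem_P_hölder} and the Lipschitz constant of Theorem~\ref{thm:LipschitzDerivKernel} are independent of the number of interpolation points, and $\Vert J\Vert_{\mathcal H_k(\mathcal P)}$ is a fixed constant; combined with Assumption~\ref{assumption_list}~(c),(e) this yields the uniform bounds, but one should spell out that $c^{(i)}(\mu^{(i)})\geq c_l$ is genuinely an assumption of the theorem (it is hypothesized, not derived) and that $\cos\Phi^{(i)}\geq\cos\Phi>0$ is used to keep the Armijo prefactor bounded away from zero. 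A secondary, more minor point is justifying the exact identity $\nabla\hat J^{(i)}(\mu^{(i)})=\nabla J(\mu^{(i)})$ — this is immediate from the Hermite interpolation conditions in \eqref{eq:interDeriv} since $\mu^{(i)}$ is among the centers, but it is worth stating explicitly because it is what lets us conclude about $\nabla J$ rather than merely about the surrogate gradient.
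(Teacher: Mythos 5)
Your proposal is correct and follows essentially the same route as the paper's proof: combine the sufficient decrease condition with the AGC decrease bound from Theorem~\ref{auxilary_statements_from_YM}, telescope using the interpolation identities $\hat J^{(i)}(\mu^{(i)}) = J(\mu^{(i)})$ and $J(\mu^{(i+1)}) = \hat J^{(i+1)}(\mu^{(i+1)})$, invoke the uniform-in-$i$ constants to get a fixed positive prefactor, and conclude by a boundedness/divergence contradiction that $\Vert\nabla\hat J^{(i)}(\mu^{(i)})\Vert \to 0$, which equals $\Vert\nabla J(\mu^{(i)})\Vert$ by the Hermite interpolation conditions. Your explicit justification of the final identity $\nabla\hat J^{(i)}(\mu^{(i)}) = \nabla J(\mu^{(i)})$ is a welcome clarification of a step the paper leaves implicit, but it is not a different argument.
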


\begin{proof}
According to $J(\mu^{(i+1)}) = \hat{J}^{(i+1)}(\mu^{(i+1)})$, (\ref{error_aware_sufficient_decrease_condition}) and \eqref{theorem3.2_eqn315} we get \begin{align*}
\hat{J}^{(i)}(\mu^{(i)}) &- \hat{J}^{(i+1)}(\mu^{(i+1)})  \\
&\geq \hat{J}^{(i)}(\mu^{(i)}) - \hat{J}^{(i)}(\mu_{\textnormal{AGC}}^{(i)}) \\
&\geq \left(\kappa_{\textnormal{arm}} \cos \Phi^{(i)} \right) \left\Vert\nabla \hat{J}^{(i)} (\mu^{(i)}) \right\Vert \min \left\{ \kappa_{\nabla \hat{J}}^{(i)} \; \left\Vert\nabla \hat{J}^{(i)} (\mu^{(i)}) \right\Vert, \kappa_{\textnormal{bt}} \frac{\left(c^{(i)}(\mu^{(i)})\right)^2}{\left(C_c^{(i)} \right)^2} \right\}.
\end{align*}
Using summation, we end up with \begin{align*}
\hat{J}^{(0)}(\mu^{(0)}) &- \hat{J}^{(m)}(\mu^{(m)})  
\\ &\geq \kappa_{\textnormal{arm}} \sum_{i=0}^{m-1} \cos \Phi^{(i)} \left\Vert\nabla \hat{J}^{(i)} (\mu^{(i)}) \right\Vert \min \left\{ \kappa_{\nabla \hat{J}}^{(i)} \; \left\Vert\nabla \hat{J}^{(i)} (\mu^{(i)}) \right\Vert, \kappa_{\textnormal{bt}} \frac{\left(c^{(i)}(\mu^{(i)})\right)^2}{\left(C_c^{(i)}\right)^2} \right\},
\end{align*}
for all $m \in \mathbb{N}$, which can be seen via induction. Since \begin{align*}
\kappa_{\nabla \hat{J}}^{(i)} = \min \left\{ 1,  \frac{\kappa_{\textnormal{bt}}(1 - \kappa_{\textnormal{arm}}) \cos \Phi^{(i)}}{C_{\nabla \hat{J}}^{(i)}} \right\} \geq \min \left\{ 1, \frac{\kappa_{\textnormal{bt}}(1 - \kappa_{\textnormal{arm}}) \cos \Phi}{C_{\nabla \hat{J}}} \right\} =: \kappa_{\nabla\hat{J}},
\end{align*}
we get \begin{align*}
\hat{J}^{(0)}(\mu^{(0)}) &- \hat{J}^{(m)}(\mu^{(m)}) \\
&\geq (\kappa_{\textnormal{arm}} \cos \Phi) \sum_{i=0}^{m-1} \left\Vert \hat{J}^{(i)}(\mu^{(i)}) \right\Vert \min \left\{ \kappa_{\nabla \hat{J}} \; \left\Vert\hat{J}^{(i)}(\mu^{(i)}) \right\Vert, \kappa_{\textnormal{bt}} \frac{c_l^2}{C_c^2} \right\}.
\end{align*}
Now we prove $\lim_{i \rightarrow \infty} \left\Vert\nabla J(\mu^{(i)}) \right\Vert = 0$ by contradiction. Assume there exists an $\epsilon \in \left(0, \left\Vert\nabla \hat{J}^{(0)}(\mu^{(0)}) \right\Vert \right)$ and a index-subsequence $\nu_j$ satisfying $\left\Vert \nabla \hat{J}^{(\nu_j)} (\mu^{(\nu_j)}) \right\Vert \geq \epsilon$ for all $j \in \mathbb{N}$ with $\nu_0 = 0$. Applying $\lim_{j \rightarrow \infty}$ on both sides of the previous inequality yields \begin{align*}
\lim_{j \rightarrow \infty} \hat{J}^{(0)} (\mu^{(0)}) - \hat{J}^{(\nu_j)}(\mu^{(\nu_j)}) \geq \lim_{j \rightarrow \infty} \kappa_{\textnormal{arm}} \cos \Phi \sum_{m=0}^{j-1} \epsilon \min \left\{ \kappa_{\nabla \hat{J}} \; \epsilon, \kappa_{\textnormal{bt}} \frac{c_l^2}{C_c^2} \right\} = + \infty,
\end{align*}
contradicting the fact that $\hat{J}^{(0)}(\mu^{(0)})$ is finite and $\hat{J}^{(i)}(\mu^{(i)})$ is bounded from below. Therefore, \begin{align*}
    \lim_{i \rightarrow \infty} \left\Vert\nabla \hat{J}^{(i)}(\mu^{(i)}) \right\Vert = \lim_{i \rightarrow \infty} \left\Vert\nabla J(\mu^{(i)}) \right\Vert = 0.
\end{align*}   
\end{proof}

All assumptions of Theorem \ref{theorem_convergence_kernel_tr} are not very restrictive except that (\ref{error_aware_sufficient_decrease_condition}) has to be satisfied for all iterations $i$. In Section \ref{section_hermite_TR_kernel}, we have already discussed how to deal with the scenario where (\ref{error_aware_sufficient_decrease_condition}) is not satisfied and proposed sufficient and necessary conditions that should be utilized instead of (\ref{error_aware_sufficient_decrease_condition}). 

\subsection{Parameter constrained optimization problems}
\label{section:projected_hktr}
Until now, we focused on the optimization problem using the feasible set $\mathcal{P} = \mathbb{R}^p$. In this section, we comment on the required changes if we restrict the iterates to subsets  $\mathcal{P} \subset \mathbb{R}^p$ of the form \label{section_constrained_optimization}
\begin{align*}
\mathcal{P} := \left\{ \mu \in \mathbb{R}^p \; | \; \mu_a \leq \mu \leq \mu_b \right\} \subset \mathbb{R}^p.
\end{align*}
Here, we have $\mu_a, \mu_b \in (\mathbb{R} \cup \{ \pm \infty \})^p $ and $\leq$ should be understood component-wise. This is commonly referred to as box-constraints. The primary distinction compared to the unconstrained scenario is that we must ensure that all calculated iterates, both in the \ac{hktr} algorithm and when solving the optimization subproblem, remain within the specified parameter set $\mathcal{P}$. In order to describe this rigorously, we define a projection map that maps $\mu$ to the nearest point (measured in the Euclidean norm) in $\mathcal{P}$.

\begin{definition}(Projection map) \\
We define the projection map $\Pi_{\mathcal{P}}: \mathbb{R}^p \rightarrow \mathcal{P}$ as \begin{align*}
(\Pi_{\mathcal{P}}(\mu))_m := \begin{cases}
(\mu_a)_m \quad \textnormal{ if } \mu_m \leq (\mu_a)_m \\
(\mu_b)_m \quad \textnormal{ if } \mu_m \geq (\mu_b)_m \\
(\mu)_m \quad \; \; \textnormal{ otherwise }
\end{cases}
\qquad \forall \; m=1,...,p.
\end{align*} 
\end{definition}

We also introduce \begin{align*}
\mu^{(i,l)}(j) := \Pi_{\mathcal{P}}(\mu^{(i,l)} + \kappa_{\textnormal{bt}}^j p^{(i,l)}) \textnormal{ for } j \geq 0, 
\end{align*}
in order to guarantee, that all iterates of the \ac{bfgs} algorithm and the Armijo backtracking search also lie within $\mathcal{P}$. We have to reformulate the termination criteria for the optimization subproblem as well as the one for the \ac{hktr} algorithm (Algorithm \ref{kernel_TR_algorithm}) as \begin{align*}
\left\Vert \mu^{(i,l)} - \Pi_{\mathcal{P}} \left( \mu^{(i,l)} - \nabla \hat{J}^{(i)} (\mu^{(i,l)}) \right) \right\Vert_\infty \leq \tau_{\textnormal{sub}},
\end{align*}
respectively 
\begin{align*}
\left\Vert \mu^{(i)} - \Pi_{\mathcal{P}} \left( \mu^{(i)} - \nabla \hat{J}^{(i)} (\mu^{(i)}) \right) \right\Vert_\infty \leq \tau_{\textnormal{FOC}}.
\end{align*}
The convergence proof of this projected version of the \ac{hktr}, which we will refer to as the \ac{phktr} algorithm, follows identical to the argumentation presented in Section \ref{section_convergence_theory_kernel_TR} and relies on the Lipschitz continuity of the projection map $\Pi_\mathcal{P}$ with Lipschitz constant $C=1$. We refer to \citep[Section 4.2]{Keil_2021}, which outlines a convergence proof based on such projected quantities. 


\section{Numerical examples}
\label{chapter_numerical_examples}
In this section, we apply the \ac{phktr} algorithm (Algorithm \ref{kernel_TR_algorithm}) to solve three optimization problems. The first one, a 1D problem, is a toy example specifically designed for the Gaussian kernel. The other two problems are \ac{pde}-constrained optimization problems, considered in 2D and 12D, respectively. The code for this section with the results of the numerical experiments presented below can be found on \texttt{GitHub}\footnote{See \url{https://github.com/ullmannsven/A-Trust-Region-framework-for-optimization-using-Hermite-kernel-surrogate-models}}.

\subsection{Setup and comparison}
In the following sections, the performance of the \ac{phktr} 
algorithm (Algorithm~\ref{kernel_TR_algorithm}) is compared with two 
methods from \texttt{scipy.optimize.minimize}, namely \texttt{trust-constr} 
and \texttt{L-BFGS-B}. Both of these methods accommodate box constraints on 
the parameter space and circumvent the need for an explicit Hessian 
computation. The \texttt{trust-constr} method belongs to the class of 
\ac{tr} algorithms, similar to 
Algorithm~\ref{kernel_TR_algorithm}, but employs a quadratic surrogate 
model similar to \eqref{quadratic_model_J}. In our context, it follows the 
implementation in \cite{trust-constr}, where the subproblem is solved via 
the sequential least squares quadratic programming method \cite{slsqp}. Meanwhile, \texttt{L-BFGS-B} is a popular 
choice for problems with box constraints that do not require explicit 
Hessian information \citep{lbfgsb_1,lbfgsb_2}. For all three methods, the same two termination criteria are used:
\[
\left\Vert \nabla J (\mu^{(i)}) \right\Vert_\infty
  \;\leq\; \tau_{\textnormal{FOC}}
\quad
\textnormal{or}
\quad
\frac{J(\mu^{(i)}) - J(\mu^{(i+1)})}
     {\max \left\{J(\mu^{(i)}),\,J(\mu^{(i+1)}),\,1 \right\}}
  \;\leq\;\tau_J.
\]
The specific values of \(\tau_{\textnormal{FOC}}\) and \(\tau_J\) are detailed 
for each experiment. Note that \texttt{trust-constr} does not allow a tolerance using $\tau_J$, so only $\tau_{\textnormal{FOC}}$ was used there. In each case, five random initial guesses 
\(\mu^{(0)} \in \mathcal{P}\) are generated to test 
\texttt{L-BFGS-B}, \texttt{trust-constr}, and the \ac{phktr} 
algorithm, where we use the same initial guesses for all three algorithms. A reference solution, used to evaluate accuracy, is computed via 
\texttt{L-BFGS-B} with stricter tolerances \(\tau_{\textnormal{FOC}}\) and 
\(\tau_J\). In all following sections, we measure the accuracy and efficiency of the \ac{phktr} algorithm by comparing the \ac{av} \ac{fom} evaluations until termination, the \ac{av} \ac{foc} condition $\Vert \nabla J(\cdot) \Vert$ at the last iteration and the \ac{av} relative error in $J$ to the reference solution, while testing different values for the kernel shape parameter $\varepsilon$. The remaining parameters of the \ac{phktr} algorithm are kept constant and we refer to the \texttt{GitHub} repository for the exact values.

\subsection{1D optimization problem} 
\label{subsection_numerical_examples_1D}
The 1D problem is designed as a tailored optimization problem to illustrate the application of the Gaussian kernel, as we choose the objective function as \begin{align*}
    J(\mu) = - \exp(-\mu^2) + 3 \exp(- 0.001 \mu^2).
\end{align*} 
Note that the numbers $3$ and $0.001$ in the definition of $J$ are chosen s.t. Assumption \ref{assumption_list} b) is satisfied. While evaluating $J$ is computationally inexpensive in this case, meaning there is no practical necessity to construct a surrogate model, we include this example to demonstrate the methodology and validate the approach in a controlled and straightforward scenario. We first demonstrate how to compute the \ac{rkhs}-norm for the objective function $J$, which can be done explicitly in this scenario. Following \cite[Theorem 10.12]{wendland_2004}, the \ac{rkhs}-norm - corresponding to a translation invariant \ac{spd} kernel $k$ with $\phi \in C(\mathbb{R}) \cap L^1(\mathbb{R})$ - of a univariate function $J \in L^2(\mathbb{R}) \cap C(\mathbb{R})$, s.t $\frac{\mathcal{F}(J)}{\sqrt{\mathcal{F}(\phi)}} \in L^2(\mathbb{R})$, can be computed via \begin{align} \label{eq:integral_rkhsnorm}
    \Vert J\Vert_{\mathcal{H}_{k}}^2 = \frac{1}{\sqrt{2 \pi}} \int_{\mathbb{R}} \frac{|\mathcal{F}(J)(\omega)|^2}{\mathcal{F}(\phi)(\omega)} d\omega,
\end{align}
where $\mathcal{F}$ denotes the Fourier transformation. We refer to Appendix \ref{appendix_rhhsnorm} for the exact computation and note that the \ac{rkhs}-norm is well defined for $\varepsilon^2 > \sfrac{1}{2}$, whereas for $\varepsilon^2 \leq \sfrac{1}{2}$ the integral in \eqref{eq:integral_rkhsnorm} diverges. With this knowledge at hand, we start to solve the optimization problem. As the minimizer for $J$ is $\mu^* = 0 $ with $J(\mu^*) = 2$, we choose as parameter set $\mathcal{P} := [-2,2]$, which is symmetric around the optimal value and $J$ only has one extrema (at $\mu^*=0$) in this interval. Until the end of this section, the optimal parameter $\mu^*$ will serve as the reference solution, against which the accuracy and efficiency of the \ac{phktr} algorithm will be measured. As convergence criteria, we employ thresholds of \(\tau_{\textnormal{FOC}} = 10^{-7}\) 
for the \ac{foc} condition and \(\tau_J = 10^{-14}\) for the objective function.\\

The results for different kernel shape parameters $\varepsilon$ using the Gaussian kernel are displayed in Table \ref{table_results_1D_Gaussian}. The results demonstrate the relevance of the choice of $\varepsilon$. The best results were obtained for kernel shape parameters \(\varepsilon\) chosen close to, but strictly greater than, the lower admissible bound \(\varepsilon = \sfrac{1}{\sqrt{2}}\), which is not itself permitted. In this case, the \ac{phktr} algorithm converges slightly faster (in terms of \ac{fom} evaluations) than the two \texttt{scipy} algorithms, compare Table \ref{table_results_1D_compare}. Note that due to the simplicity of the objective function $J$, which is unimodal in $\mathcal{P}$, we can not expect major speedups with the proposed Algorithm \ref{kernel_TR_algorithm} compared to the \texttt{scipy} algorithms.
 
\begin{table}[ht!]
\centering
\small
\renewcommand{\arraystretch}{1.2}
\begin{tabular}{|c||c|c|c|c|c|}
\hline
kernel shape parameter $\varepsilon$ & \ac{av} \ac{fom} evaluations & \ac{av} \ac{foc} condition & \ac{av} relative error in $J$  \\
\hline 
\hline
$0.725$ & $5.6$ &  $1 \cdot 10^{-8}$ & $4 \cdot 10^{-17}$  \\
\hline 
$0.75$ & $6.0$ & $7 \cdot 10^{-8}$ & $2 \cdot 10^{-15}$  \\
\hline
$1.0$ & $6.6$ & $1 \cdot 10^{-8}$ & $9 \cdot 10^{-17}$  \\
\hline 
$2.0$ & $7.2$ & $1 \cdot 10^{-7}$ & $4 \cdot 10^{-15}$  \\
\hline
$10.0$ & $9.8$ & $1 \cdot 10^{-7}$ & $7 \cdot 10^{-15}$  \\
\hline 
\end{tabular}
\caption{Performance and accuracy of the \ac{phktr} algorithm using the Gaussian kernel to solve the 1D optimization problem for five optimization runs with randomly sampled initial parameters $\mu^{(0)} \in \mathcal{P}$.}
\label{table_results_1D_Gaussian}
\end{table}

\begin{table}[ht!]
\centering
\small
\renewcommand{\arraystretch}{1.2}
\begin{tabular}{|c||c|c|c|c|c|}
\hline
method & \ac{av} \ac{fom} evaluations & \ac{av} relative error in $J$  \\
\hline 
\hline
\ac{phktr} with $\varepsilon = 0.725$ & $5.6$ & $4 \cdot 10^{-17}$  \\
\hline 
\texttt{L-BFGS-B} & $6.2$  & $0$\\
\hline
\texttt{trust-constr} & $6.2$  & $0$\\
\hline 
\end{tabular}
\caption{Comparison of the \ac{phktr} algorithm using the Gaussian kernel to solve the 1D optimization problem for five optimization runs with randomly sampled initial parameters $\mu^{(0)} \in \mathcal{P}$ with the \texttt{L-BFGS-B} and \texttt{trust-constr} algorithm.}
\label{table_results_1D_compare}
\end{table}

\subsection{2D \ac{pde} constrained optimization problem}
\label{subsection_pde_contraint_num_example}
The second problem we address is formulated in the pyMOR (see \cite{pymor}) Tutorial: \textit{Model Order Reduction for \ac{pde}-constrained optimization problems}\footnote{\url{https://docs.pymor.org/2024-2-0/tutorial_optimization.html}}. We first provide a formulation of the optimization problem. We consider the domain $X := (-1,1) \times (-1,1)$, the parameter set $\mathcal{P} := [0.5, \pi] \times [0.5, \pi]$ and the parameter dependent, elliptic \ac{pde} with homogeneous Dirichlet boundary conditions \begin{align}
- \nabla \cdot (\lambda(x; \mu) \nabla u(x;\mu)) &= l(x) \quad \textnormal{ in } X \label{eqn:pde_constraint_2D}\\
u(x; \mu) &= 0  \quad \quad \textnormal{ on } \partial X \nonumber
\end{align}
with solution $u(\cdot, \mu) \in H_0^1(X)$, where $H_0^1(X)$ denotes the $L^2$-Sobolev space of order one with homogeneous Dirichlet boundary values. Here $x := \begin{bmatrix}
    x_1 & x_2
\end{bmatrix}^T \in X,  \; \mu := \begin{bmatrix}
    \mu_1 & \mu_2
\end{bmatrix}^T \in \mathcal{P}$ and \begin{align*}
l(x) &:= \sfrac{1}{2} \; \pi^2 \cos \left(\sfrac{1}{2} \; \pi x_1 \right) \cos \left(\sfrac{1}{2} \;\pi x_2 \right), \\ \lambda(x; \mu) &:= \theta_1(\mu)\lambda_1(x) + \theta_2(\mu) \lambda_2(x), \\
\theta_1(\mu) &:= 1.1 + \sin(\mu_1)\mu_2, \\
\theta_2(\mu) &:= 1.1 + \sin(\mu_2), \\
\lambda_1(x) &:= \chi_{X \setminus \omega}(x), \\
\lambda_2(x) &:= \chi_{\omega}(x), \\
\omega &:= \left( \left[-\sfrac{2}{3}, - \sfrac{1}{3} \right] \times \left[-\sfrac{2}{3}, - \sfrac{1}{3}\right] \right) \cup \left(\left[-\sfrac{2}{3}, - \sfrac{1}{3}\right] \times \left[\sfrac{1}{3}, \sfrac{2}{3}\right]\right).
\end{align*}
Here $\chi_A$ denotes the indicator function for the subset $A \subseteq X$. By multiplying \eqref{eqn:pde_constraint_2D} with a test function $v \in H_0^1(X)$, integrating over the domain $X$ and applying partial integration for the left-hand side, we obtain the primal equation \begin{align}
\label{primal_eqn_pymor}
\underbrace{\int_X \lambda(x; \mu) \nabla u(x; \mu) \cdot \nabla v(x) dx}_{=: a(u(x;\mu),v;\mu)} = \underbrace{\int_X l(x) v(x) dx }_{=: f(v)} \quad \forall \; v \in H_0^1(X).
\end{align}
Moreover, we consider an objective function depending on the solution $u(x;\mu)$ of the primal equation (\ref{primal_eqn_pymor}) \begin{align*}
J(\mu) := \theta_J(\mu) f(u(\cdot; \mu)) 
\end{align*}
with $\theta_J(\mu) := 1 + \frac{1}{5}(\mu_1 + \mu_2)$ for $\mu \in \mathcal{P}$. Every evaluation of the objective function $J$ involves a solution $u(x; \mu)$ of the primal equation (\ref{primal_eqn_pymor}). To obtain this solution, we utilize pyMOR's discretization toolkit, which allows to construct and solve parametrized \ac{fom}s. Figure \ref{figure:Objective_function2D} visualizes the objective function $J$ over the parameter set $\mathcal{P}$.  \\

\begin{figure}[ht!]
\centering 
\includegraphics[scale=0.5]{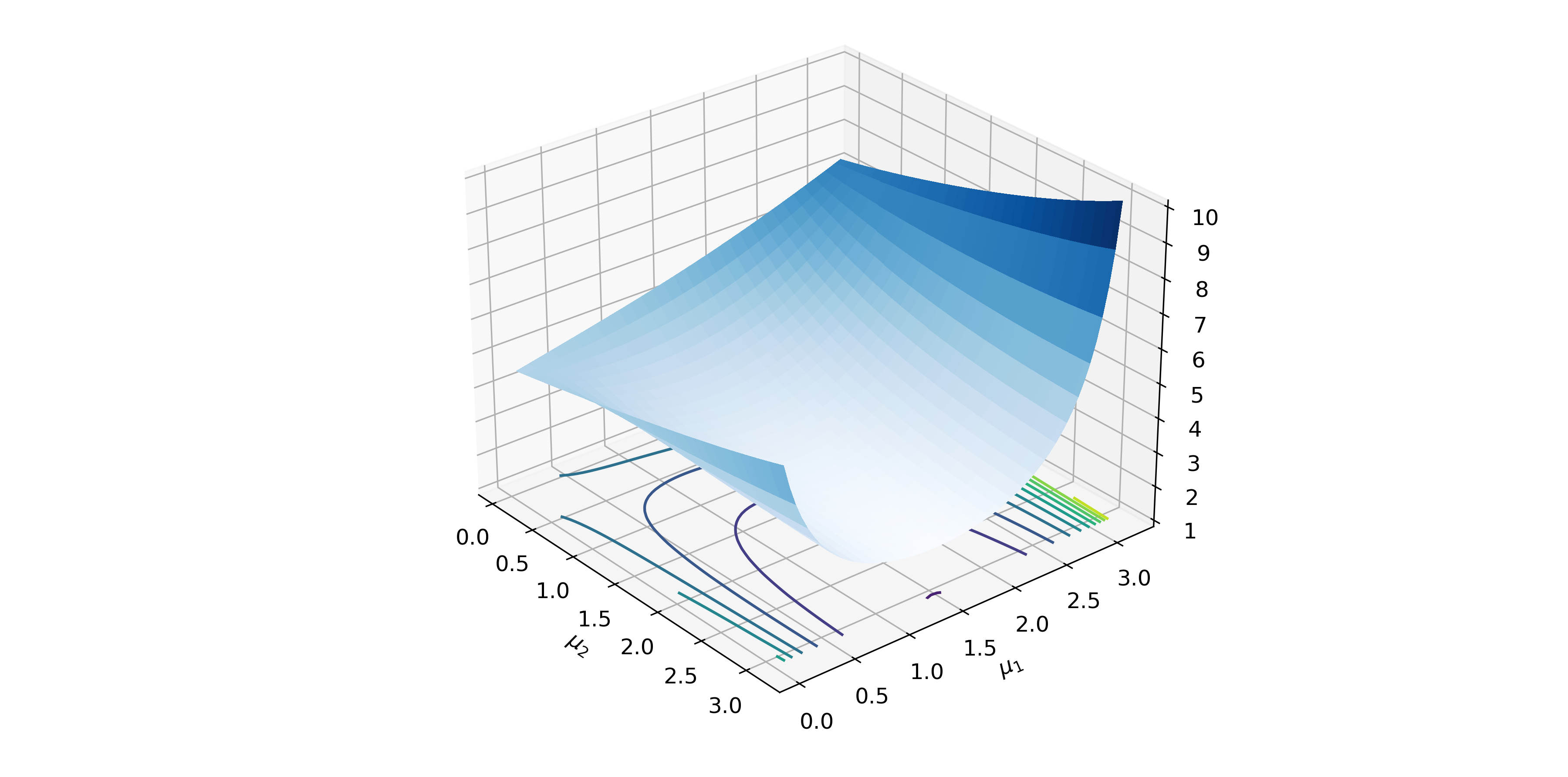}
\caption{Objective function $J$ over the parameter set $\mathcal{P}$}
\label{figure:Objective_function2D}
\end{figure}

As convergence criteria, we employ thresholds of \(\tau_{\textnormal{FOC}} = 10^{-4}\) for the \ac{foc} condition and \(\tau_J = 10^{-12}\) for the objective function. The optimal solution for this problem is given by $\mu^* = \begin{bmatrix}
    1.4246656 & \pi
\end{bmatrix}^T$ with $J(\mu^*) = 2.39170787$. Until the end of this section, the optimal parameter $\mu^*$ will serve as the reference solution, against which the accuracy and efficiency of the \ac{phktr} algorithm will be measured. \\

The results of the \ac{phktr} algorithm when utilizing the quadratic Matérn kernel with different shape parameters are displayed in Table \ref{table::2DPDE_IMQ}. For the target function $J$ in this example, unlike the one in Section \ref{subsection_numerical_examples_1D}, we can not compute the \ac{rkhs}-norm exactly. We therefore estimate the \ac{rkhs}-norm using \eqref{eq:computation_rkhs}. To that end we compute a global interpolant for $J$ using $n$ randomly sampled parameters $\mu \in \mathcal{P}$. By \eqref{eq:estimate_rkhsnorm} this estimate converges towards $\Vert J \Vert_{\mathcal{H}_k(\mathcal{P})}$ for $n \rightarrow \infty$. Note that we have omitted the \ac{fom} evaluations required to estimate the \ac{rkhs}-norm in Table \ref{table::2DPDE_IMQ}. We made this decision for two primary reasons. Firstly, this task lends itself to easy parallelization. Secondly, we can obtain these FOM solutions by employing a coarser mesh in solving the primal equation (\ref{primal_eqn_pymor}). Consequently, the runtime for estimating the \ac{rkhs}-norm does not significantly impact the overall runtime of the algorithm. \\

\begin{table}[ht!]
\centering
\small
\renewcommand{\arraystretch}{1.2}
\begin{tabular}{|c||c|c|c|c|c|}
\hline
kernel shape parameter $\varepsilon$  & \ac{av} \ac{fom} evaluations  & \ac{av} \ac{foc} condition & \ac{av} relative error in $J$   \\
\hline 
\hline
$0.1$ & $7.6$ & $2 \cdot 10^{-5}$ & $2 \cdot 10^{-10}$ \\
\hline 
$0.2$ & $6.8$ & $5 \cdot 10^{-6}$ & $2 \cdot 10^{-11}$ \\
\hline
$0.3$ & $6.8$  & $2 \cdot 10^{-5}$ & $1 \cdot 10^{-10}$ \\
\hline
$0.4$ & $6.8$ & $5 \cdot 10^{-6}$ & $2 \cdot 10^{-11}$ \\
\hline 
$0.5$ & $7.2$ & $1 \cdot 10^{-5}$ & $5 \cdot 10^{-11}$ \\
\hline
$0.6$ & $8.8$ & $4 \cdot 10^{-5}$ & $3 \cdot 10^{-10}$ \\
\hline
\end{tabular}
\caption{Performance and accuracy of the \ac{phktr} algorithm using the quadratic Matérn kernel to solve the 2D-PDE constrained optimization problem for five optimization runs with randomly sampled initial parameters $\mu^{(0)} \in \mathcal{P}$.}
\label{table::2DPDE_IMQ}
\end{table} 

The comparison of the \ac{phktr} algorithm (using the kernel shape parameter $\varepsilon = 0.4$) with the \texttt{L-BFGS-B} and \texttt{trust-constr} algorithms from \texttt{scipy} is displayed in Table \ref{table_results_2D_compare}. 
\begin{table}[ht!]
\centering
\small
\renewcommand{\arraystretch}{1.2}
\begin{tabular}{|c||c|c|c|c|c|}
\hline
method & \ac{av} \ac{fom} evaluations & \ac{av} relative error in $J$  \\
\hline 
\hline
\ac{phktr} with $\varepsilon = 0.4$  &  $6.8$  & $2 \cdot 10^{-11}$ \\
\hline 
\texttt{L-BFGS-B} & $7.0$ & $3 \cdot 10^{-11}$ \\
\hline
\texttt{trust-constr} ($\tau_{\textnormal{FOC}} = 10^{-4}$) & $7.8$ & $1 \cdot 10^{-3}$ \\
\hline 
\texttt{trust-constr} ($\tau_{\textnormal{FOC}} = 10^{-12}$) & $15.6$ & $4 \cdot 10^{-8}$ \\
\hline
\end{tabular}
\caption{Comparison of the \ac{phktr} algorithm using the quadratic Matérn kernel to solve the 2D optimization problem for five optimization runs with randomly sampled initial parameters $\mu^{(0)} \in \mathcal{P}$ with the \texttt{L-BFGS-B} and \texttt{trust-constr} algorithm using tolerances of $\tau_{\textnormal{FOC}} = 10^{-4}$ and $\tau_{\textnormal{FOC}} = 10^{-12} $.}
\label{table_results_2D_compare}
\end{table}

The results indicate that the \texttt{trust-constr} method encounters difficulties identifying the optimal parameter \(\mu^*\). This issue arises because 
\(\mu_2^*\) lies on the boundary of the parameter space \(\mathcal{P}\). Specifically, \texttt{trust-constr} employs the Lagrange gradient as its termination criterion, rather than a projected gradient, necessitating a tolerance of order of $10^{-12}$ to achieve an \ac{av} relative error in $J$ of order $10^{-8}$. Significant speedups of the \ac{phktr} algorithm over the \texttt{L-BFGS-B} solver are not to be expected in this example, since the objective function, shown in Figure~\ref{figure:Objective_function2D}, appears approximately convex under visual inspection. Consequently, the quasi-Newton approach employed by \texttt{L-BFGS-B} is already well suited to this problem structure and performs very efficiently. We now turn to an example where the \ac{phktr} algorithm outperforms \texttt{L-BFGS-B} in terms of \ac{fom} evaluations, demonstrating its potential advantages in more challenging optimization landscapes.

\subsection{12D \ac{pde} constraint optimization problem}
\label{sect:num12D}
For the 12D problem, we consider a problem formulated in \citep[Section 5.3]{Keil_2021}, which deals with stationary heat distribution in a building. While \cite{Keil_2021} considers the problem in ten parameter dimensions, a preprint by the same authors extends it to twelve parameter dimensions (see Section 4.2 in arXiv:2012.11653). We present this problem in detail, following these two references. As the objective functional $\mathcal{J}: H \times \mathcal{P} \rightarrow \mathbb{R}$ (where $H \subset H^1(X)$ denotes a suitable function space which accounts for the Robin boundary data of the \ac{pde}-constraint \eqref{eqn:12D_pde_robin}), a weighted $L^2$-error on the domain of interest $D \subseteq X := [0,2] \times [0,1] \subset \mathbb{R}^2$ together with a regularization term is considered
\begin{align*}
\mathcal{J}(u(\cdot;\mu); \mu) = 50 \int_D (u(x;\mu) - u^d(x))^2 \, dx + \frac{1}{2} \sum_{m=1}^{12} \sigma_m (\mu_m - \mu_m^d)^2 + 1.
\end{align*}
Here \( u^d \) denotes the desired state, \( \mu^d \) the desired parameter and the weights $(\sigma_m)_{m=1}^{12}$ will be specified below. The constant term $1$ is added to fulfill Assumption \ref{assumption_list} b) and does not influence the location of the local minimum. As PDE-constraint, we consider as in Section \ref{subsection_pde_contraint_num_example} the parameterized stationary heat equation, this time with Robin boundary data:
\begin{align}
-\nabla \cdot (\lambda(x; \mu) \nabla u(x; \mu)) &= f(x; \mu)  &&\textnormal{in } X, \nonumber \\
c(x; \mu) (\lambda(x; \mu) \nabla u(x; \mu) \cdot n(x))&= (u_{\textnormal{out}}(x) - u(x; \mu)) &&\textnormal{on } \partial X, \label{eqn:12D_pde_robin}
\end{align}
with parametric diffusion coefficient $\lambda(\cdot; \mu) \in L^\infty(X)$, source term $f(\cdot; \mu) \in L^2(X)$, outside temperature $u_{\textnormal{out}} \in L^2(\partial X)$, Robin function $c(\cdot; \mu) \in L^\infty(\partial X)$ and the outer unit normal $n: \partial X \rightarrow \mathbb{R}^2$. Deriving the weak formulation analogously to Section \ref{subsection_pde_contraint_num_example} yields  \begin{align*}
a(u, v; \mu) &:= \int_X \lambda(x; \mu) \nabla v(x) \cdot \nabla u(x;\mu) \, dx + \int_{\partial X} \frac{1}{c(s; \mu)} v(s) u(s;\mu) \, ds, \\
l(v; \mu) &:= \int_X f(x; \mu) v(x) \, dx +  \int_{\partial X} \frac{1}{c(s; \mu)} u_{\textnormal{out}}(s) v(s) \, ds,
\end{align*} for $v \in H$. Motivated by the goal of maintaining a specified temperature within a single 
room \(D\) of a building floor \(X\), we account for the presence of windows, heaters, doors, and walls in the design, compare Figure \ref{figure::building_floor}. In this figure, numbers $j$ indicate different components inside the building floor, where $j.$ represents a window, $j|$ a wall and $\underline{j}$ a door. The $j$-th heater is located under window $j$. \\
\begin{figure}[h!]
    \centering
    \includegraphics[width=0.9\linewidth]{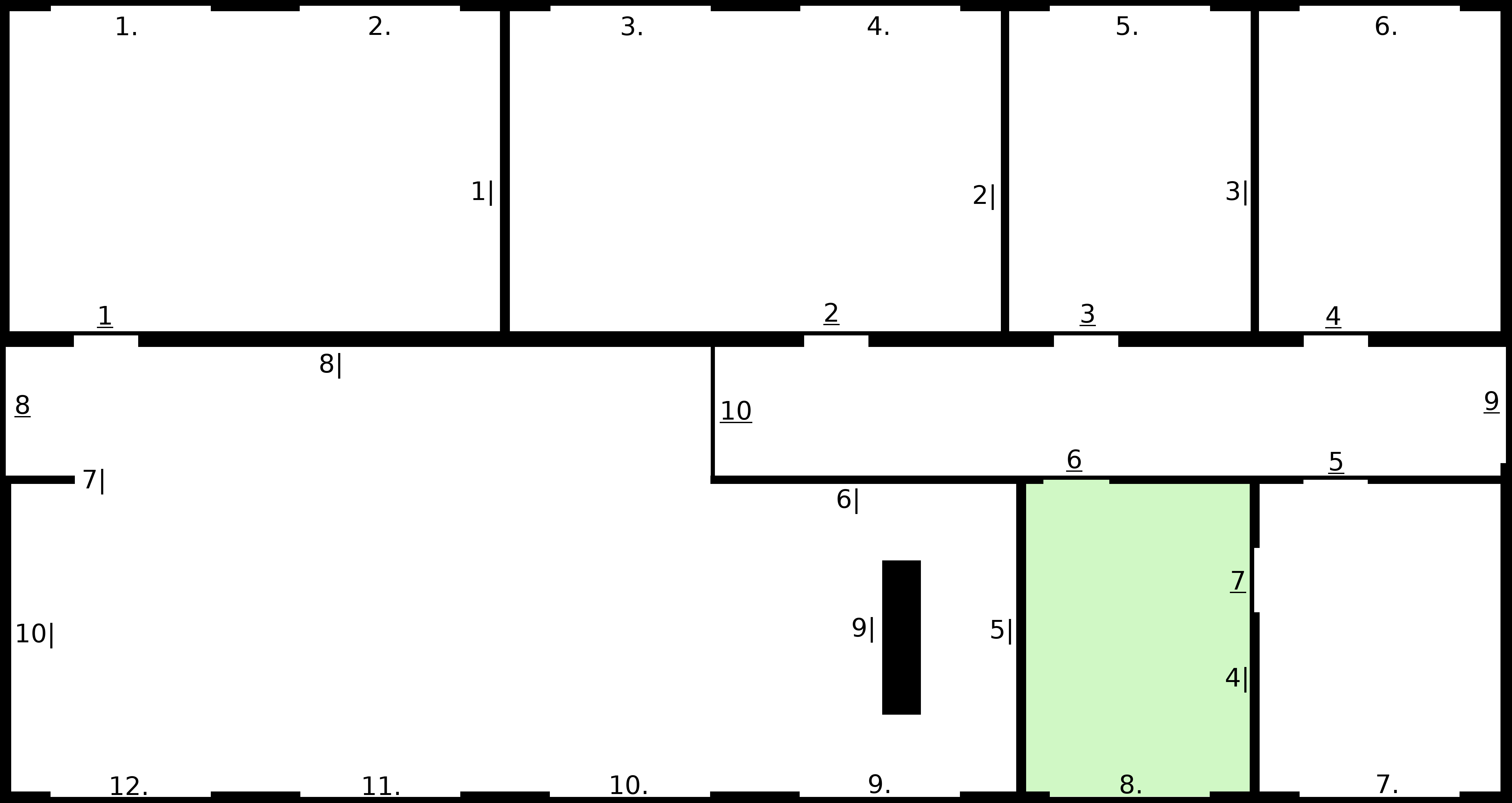}
    \caption{Figure 3 from \cite{Keil_2021}: The green room shows the domain of interest $D \subseteq X$.}
    \label{figure::building_floor}
\end{figure}

We seek to ensure a desired temperature $u^d(x) := 18\chi_D(x)$ and set \(\mu_m^d := 0 \; \forall \; m=1,\dots,12\). For the FOM discretization we use pyMOR's discretization toolkit. A cubic mesh is generated such that all spatial variations in the data functions extracted from Figure \ref{figure::building_floor} are fully resolved, yielding a discretised system with 
80601 degrees of freedom. We consider a 12D parameter set containing two door sets $\{ \underline{6} \}, \{ \underline{7}  \}$, seven heater sets \(\{1, 2\}\), \(\{3, 4\}\), \(\{5\}\), \(\{6\}\), \(\{7\}\), \(\{8\}\), and \(\{9, 10, 11, 12\}\), as well as three wall sets \(\{1|, 2|, 3|, 7|, 8| \}\), \(\{4|, 5|, 6| \}\), and \(\{9| \}\), where each set is governed by a single parameter component, resulting in 12 parameters. The set of admissible parameters is given by \(\mathcal{P} := [0.05, 0.2]^2 \times [0, 100]^7 \times [0.025, 0.1]^3\). We choose \begin{align*}
    (\sigma_m)_{1 \leq m \leq 12} = (\sigma_d, \sigma_d, 4\sigma_h, 4\sigma_h, \sigma_h, \sigma_h, \sigma_h, \sigma_h, 8\sigma_h, \sigma_w, \sigma_w, \sigma_w),
\end{align*} 
with \(\sigma_d = 1\), \(\sigma_h = 0.0005\) and $\sigma_w = 0.1$.  The other components of the data functions are fixed and thus not directly involved in the optimization process. They are chosen as follows: Air as well as the opened inside doors $\{ \underline{1}, \underline{2}, \underline{3}, \underline{4}, \underline{5}, \underline{10} \}$ have a diffusion coefficient of $0.5$, the outside doors $\{ \underline{8}, \underline{9} \}$ are closed with a constant diffusion coefficient of $0.001$. Further the outside wall $\{ 10|\}$ also has the diffusion coefficient $0.001$. All windows $\{ 1., \dots, 12. \}$ are supposed to be closed with diffusion constant $0.05$. The Robin data $c(\cdot; \mu)$ contains information about the outside wall $\{10| \}$, outside doors $\{ \underline{8}, \underline{9} \}$ and all windows. All other diffusion terms enter into $\lambda(\cdot; \mu)$. The source term contains all the information about the 12 heaters. The outside temperature is set to  $u_{\textnormal{out}} \equiv 5$. \\

As convergence criteria, we employ thresholds of $\tau_{\textnormal{FOC}} = 5 \cdot 10^{−4}$ for the \ac{foc} condition and $\tau_J = 10^{−12}$
for the objective function. Additionally, we restrict the algorithms to a maximum of $100$ iterations. The optimal value of the target function is  $J(\mu^*) = 5.813965$ (we refer to the \texttt{GitHub} repository for the optimal parameter $\mu^*$). In the \ac{phktr} algorithm (Algorithm \ref{kernel_TR_algorithm}) the Wendland kernel of second order provided good results. Table \ref{table:12DPDE_quadmatern} shows the performance and accuracy using different kernel shape parameters $\varepsilon$. Following the approach in \Cref{subsection_pde_contraint_num_example}, we estimated the \ac{rkhs}-norm using the method described in \Cref{sect:Hermite} and did not include the required \ac{fom} evaluations in Table \ref{table:12DPDE_quadmatern}. 
\begin{table}[ht!]
\centering
\small
\renewcommand{\arraystretch}{1.2}
\begin{tabular}{|c||c|c|c|c|c|}
\hline
kernel shape parameter $\varepsilon$  & \ac{av} \ac{fom} evaluations & \ac{av} \ac{foc} condition & \ac{av} error in $J$   \\
\hline 
\hline
$0.0006$ & $45.2$  & $ 6.8 \cdot 10^{-4}$ & $ 6.6\cdot 10^{-5}$ \\
\hline
$0.0008$ & $43.4$ & $4.6 \cdot 10^{-4}$ & $4.9 \cdot 10^{-5}$ \\ 
\hline 
$0.001$ & $52.8$ & $5.1 \cdot 10^{-4}$ & $4.9 \cdot 10^{-5}$ \\
\hline
\end{tabular}
\caption{Performance and accuracy of the \ac{phktr} algorithm using the Wendland kernel of second order to solve the 12D-PDE constrained optimization problem for five optimization runs with randomly sampled initial parameters $\mu^{(0)} \in \mathcal{P}$.}
\label{table:12DPDE_quadmatern}
\end{table} 

The comparison between the \ac{phktr} algorithm with the \texttt{L-BFGS-B} and the \texttt{trust-constr} algorithm is shown in Table \ref{table_results_12D_compare}.
\begin{table}[ht!]
\centering
\small
\renewcommand{\arraystretch}{1.2}
\begin{tabular}{|c||c|c|c|c|c|}
\hline
method & \ac{av} \ac{fom} evaluations & \ac{av} relative error in $J$  \\
\hline 
\hline
\ac{phktr} with $\varepsilon = 0.0008 $  & $43.4$  & $4.9 \cdot 10^{-5}$  \\
\hline 
\texttt{L-BFGS-B} & $54.2$  & $1.3 \cdot 10^{-5}$  \\
\hline
\texttt{trust-constr} & $75.0$ & $7.8 \cdot 10^{-4}$ \\
\hline 
\end{tabular}
\caption{Comparison of the \ac{phktr} algorithm using the Wendland kernel of second order to solve the 12D optimization problem for five optimization runs with randomly sampled initial parameters $\mu^{(0)} \in \mathcal{P}$ with the \texttt{L-BFGS-B} and \texttt{trust-constr} algorithm.}
\label{table_results_12D_compare}
\end{table}
Using a kernel shape parameter of $\varepsilon = 0.0008$, the \ac{phktr} algorithm outperforms both \texttt{scipy} algorithms in terms of \ac{fom} evaluations by $20\%$ and $41\%$, respectively. We remark that the \texttt{trust-constr} method once terminated due to the maximum amount of iterations and not due to the \ac{foc} condition. We observe that for this reason, the \texttt{trust-constr} solver performs one order of magnitude worse in terms of relative error in the objective function $J$. This contributes, as in the 2D example stated in \Cref{subsection_pde_contraint_num_example}, to the fact that the optimal $\mu^*$ has components on the boundary of $\mathcal{P}$, specifically $\mu^*_1, \mu^*_2, \mu^*_{10}, \mu^*_{11}$ and $\mu^*_{12}$, causing difficulties for the \texttt{trust-constr} 
solver. We note that a substantial outperformance over the \texttt{L-BFGS-B} or \texttt{trust-constr} method can not be expected, since all approaches rely exclusively on a history of sampled data along the optimization trajectory. Ultimately, the quality and informativeness of the available data become saturated, limiting the potential for further improvement in surrogate accuracy and, consequently, in optimization performance. Compared to the proposed \ac{phktr} algorithm, the approach introduced in \cite{Keil_2021}, which use reduced basis techniques to build the surrogate model, achieve better results in terms of FOM evaluations. This outcome is expected, since a reduced-basis-based model inherently encodes the physics of the underlying PDE, whereas our framework is purely data-driven. We refer to \Cref{sect:Conclusion}, where we outline why our approach is more flexible. 

\section{Conclusion and outlook}\label{sect:Conclusion}
In this work, we introduced a novel approach to construct surrogate models in the context of \ac{tr}-based optimization. In \Cref{section_hermite_TR_kernel}, the main section of this study, we gave a comprehensive discussion of the proposed \ac{phktr} algorithm, including a convergence proof under reasonable assumptions. One main feature of the proposed Algorithm is the definition of the \ac{tr} based on the upper bound of the kernel interpolation error - a difference to most \ac{tr} methods in literature, which restrict the \ac{tr} to balls. In \Cref{chapter_numerical_examples} we demonstrated the effectiveness of the algorithm on three different optimization problems and were able to perform better than the \texttt{scipy} implementation of the \texttt{L-BFGS-B} algorithm. \\

We outlined the strengths and weaknesses of the \ac{hktr} method. Numerical experiments detailed in \Cref{sect:num12D} indicate that the \ac{hktr} algorithm is outperformed by reduced basis surrogate models, where the (linear) FOM can be efficiently reduced and subsequently the reduced model serves as a surrogate. In this context, combining the \ac{hktr} algorithm with reduced basis methods - similar to \cite{haasdonk2023new} - could harness the strengths of each method. Specifically, the \ac{hktr} is applied to a reduced model that is adaptively updated with FOM data whenever an a posteriori error estimate reveals that the surrogate has become insufficiently accurate.
Nevertheless, the pure \ac{hktr} method exhibits considerably greater flexibility. It can be applied to nonlinear PDE-constrained problems, where constructing an appropriate reduced-basis surrogate requires more advanced techniques than in the setting of a linear coercive \ac{pde}. Furthermore, the \ac{hktr} algorithm can also be utilized for high-dimensional optimization tasks unrelated to PDEs. As long as the target function lives in the \ac{rkhs} associated with the chosen kernel, the approach will deliver favorable results. The ability to apply the \ac{hktr} algorithm to a wide range of optimization problems is undoubtedly a significant strength. \\ 

As discussed in \Cref{chapter_numerical_examples}, the kernel shape parameter $\varepsilon$ significantly influences the performance of the proposed algorithm. To reduce or eliminate this dependency, one possible direction is to incorporate an adaptive shape parameter that is updated at each iteration. In this context, we briefly explored two conceptual approaches, which, however, were not pursued or developed in detail. In the first, the shape parameter is adjusted for the entire surrogate model, influencing it globally rather than only modifying the region around the current iterate. Another approach would be to assign distinct shape parameters for each newly selected iterate $\mu^{(i)}$, while preserving those used for previous iterates. This would produce a surrogate model that is accurate not only locally but also potentially along the entire optimization path. However, such an approach would yield kernel matrices that are no longer symmetric. To the best of our knowledge, this aspect has not been thoroughly investigated from a theoretical standpoint, and fundamental questions, such as the solvability of the resulting linear systems, would naturally arise.



\section*{Acknowledgments}
The authors acknowledge the funding of the project by
the Deutsche Forschungsgemeinschaft (DFG, German Research Foundation) under
number 540080351 and Germany’s Excellence Strategy - EXC 2075 - 390740016.

\addcontentsline{toc}{section}{References}
\bibliography{sn-bibliography}

\appendix
\section{Proof of Theorem \ref{auxilary_statements_from_YM}}
\label{appendix_proof_YM}

\begin{proof}
We start by proving the following auxiliary result: (\ref{subproblem_armijo_cond}) and (\ref{subproblem_constraint_cond}) (for $\mu$ instead of $\mu^{(i)}(j)$) are satisfied for all $\mu$ of the form (\ref{eqn:lsForConvergence}) that satisfy
\begin{align}
\label{eqn:theorem_decrease_agc_first_aux_result}
\left\Vert\mu^{(i)} - \mu \right\Vert \leq \min \left\{ \frac{(1-\kappa_{\textnormal{arm}})\cos \Phi^{(i)} \left\Vert\nabla
\hat{J}^{(i)}(\mu^{(i)})\right\Vert}{C_{\nabla \hat{J}}^{(i)}}, \frac{\left(c^{(i)}(\mu^{(i)})\right)^2}{\left(C_c^{(i)} \right)^2} \right\}.
\end{align}
If $\left\Vert \nabla \hat{J}^{(i)} (\mu^{(i)}) \right\Vert = 0$, then (\ref{eqn:theorem_decrease_agc_first_aux_result}) implies $\Vert\mu^{(i)} - \mu\Vert = 0$, thus $ \mu = \mu^{(i)}$. Therefore, (\ref{subproblem_armijo_cond}) and (\ref{subproblem_constraint_cond}) hold trivially (for $\mu$ instead of $\mu^{(i)})$. Now we consider the case $\left\Vert \nabla \hat{J}^{(i)} (\mu^{(i)})\right\Vert > 0$. We introduce the abbreviation \begin{align}
\label{eqn_def_descent_dir}
\nabla_{p^{(i)}} \hat{J}^{(i)} (\mu) := \left(\nabla \hat{J}^{(i)}(\mu) \right)^T p^{(i)}.
\end{align}
For a descent direction $p^{(i)}$ we have \begin{align}
\label{eqn_descent_dir}
\nabla_{p^{(i)}} \hat{J}^{(i)} (\mu^{(i)}) = - \left\Vert \nabla \hat{J}^{(i)} (\mu^{(i)})\right\Vert \left\Vert p^{(i)}\right\Vert \cos \Phi^{(i)} < 0.
\end{align}
Let us consider the equation \begin{align}
\label{theorem3.2_eqn316}
\nabla_{p^{(i)}} \hat{J}^{(i)} (\mu) = -  \kappa_{\textnormal{arm}} \left\Vert \nabla \hat{J}^{(i)} (\mu^{(i)}) \right\Vert  \left\Vert p^{(i)} \right\Vert \cos \Phi^{(i)},
\end{align}
which has at least one solution $\tilde{\mu}$ of the form (\ref{eqn:lsForConvergence}). We prove this by contradiction. Assume \eqref{theorem3.2_eqn316} has no solution. As $\nabla \hat{J}^{(i)}$ is Lipschitz continuous according to the assumption of this theorem, it is also continuous.
\begin{enumerate}[label=\roman*)]
\item Assume that $\nabla_{p^{(i)}} \hat{J}^{(i)} (\mu) < -  \kappa_{\textnormal{arm}} \left\Vert \nabla \hat{J}^{(i)} (\mu^{(i)}) \right\Vert  \left\Vert p^{(i)} \right\Vert \cos \Phi^{(i)}$ holds for all $\mu$ of the form (\ref{eqn:lsForConvergence}). Using Lagrange's mean value theorem yields existence of a $\bar{\mu} \in \left\{ \lambda \mu + (1 - \lambda) \mu^{(i)} \; | \; \lambda \in (0,1) \right\}$, s.t. \begin{align}
\label{eqn_mu_bar_kleiner}
\hat{J}^{(i)}(\mu) - \hat{J}^{(i)}(\mu^{(i)}) &= \left( \nabla \hat{J}^{(i)}(\bar{\mu}) \right)^T(\mu - \mu^{(i)}).
\end{align}
Note that $\bar{\mu}$ is also of the form (\ref{eqn:lsForConvergence}), as \begin{align*}
\bar{\mu} = \lambda \mu + (1 - \lambda) \mu^{(i)} = \lambda ( \mu^{(i)} + \alpha p^{(i)} ) + (1 - \lambda) \mu^{(i)} = \mu^{(i)} + \lambda \alpha p^{(i)},
\end{align*} 
using a scaled step length $\bar{\alpha} := \lambda \alpha \geq 0$. Therefore, \begin{align}
\label{mu_bar_follows_assumption}
\nabla_{p^{(i)}} \hat{J}^{(i)} (\bar{\mu}) < -  \kappa_{\textnormal{arm}} \left\Vert \nabla \hat{J}^{(i)} (\mu^{(i)}) \right\Vert \left\Vert p^{(i)} \right\Vert \cos \Phi^{(i)},
\end{align}  holds and we can use this inequality to conclude \begin{align*}
\hat{J}^{(i)}(\mu) - \hat{J}^{(i)}(\mu^{(i)}) &= \left( \nabla \hat{J}^{(i)}(\bar{\mu}) \right)^T(\mu - \mu^{(i)}) \\
&= \left( \nabla \hat{J}^{(i)}(\bar{\mu}) \right)^T (\alpha p^{(i)}) \\
\overset{(\ref{eqn_def_descent_dir})}&{=} \alpha \; \nabla_{p^{(i)}} \hat{J}^{(i)} (\bar{\mu}) \\
\overset{(\ref{mu_bar_follows_assumption})}&{<}  - \alpha \kappa_{\textnormal{arm}} \left\Vert \nabla \hat{J}^{(i)} (\mu^{(i)}) \right\Vert \left\Vert p^{(i)} \right\Vert \cos \Phi^{(i)} \\
&= - \kappa_{\textnormal{arm}} \left\Vert \nabla \hat{J}^{(i)} (\mu^{(i,l)}) \right\Vert  \left\Vert\mu - \mu^{(i)} \right\Vert \cos \Phi^{(i)}.
\end{align*}
for all $\mu$ of the form (\ref{eqn:lsForConvergence}), even for the case $\Vert\mu - \mu^{(i)}\Vert \rightarrow \infty $. This indicates $\hat{J}^{(i)}(\mu) \rightarrow - \infty$, which contradicts Assumption \ref{assumption_list} b) , namely that $\hat{J}^{(i)}$ is bounded from below. Hence, this case is not possible. 
\item Now assume $\nabla_{p^{(i)}} \hat{J}^{(i)} (\mu) > -  \kappa_{\textnormal{arm}} \left\Vert \nabla \hat{J}^{(i)} (\mu^{(i)}) \right\Vert \left\Vert p^{(i)} \right\Vert \cos \Phi^{(i)}$ holds for all $\mu$ of the form (\ref{eqn:lsForConvergence}). Analogously, we can conclude \begin{align*}
\hat{J}^{(i)}(\mu) - \hat{J}^{(i)}(\mu^{(i)}) > - \kappa_{\textnormal{arm}} \left\Vert \nabla \hat{J}^{(i)} (\mu^{(i)}) \right\Vert \left\Vert\mu - \mu^{(i)} \right\Vert \cos \Phi^{(i)}
\end{align*}
for all $\mu$ of the form (\ref{eqn:lsForConvergence}). By choosing $\alpha=0$ we obtain $\mu = \mu^{(i)}$ and therefore \begin{align*}
0 = \hat{J}^{(i)}(\mu) - \hat{J}^{(i)}(\mu^{(i)}) > - \kappa_{\textnormal{arm}} \left\Vert \nabla \hat{J}^{(i)} (\mu^{(i)}) \right\Vert \underbrace{\left\Vert\mu - \mu^{(i)} \right\Vert}_{ = 0} \cos \Phi^{(i)} = 0,
\end{align*}
which is a contradiction. Consequently, it is also impossible for this case to occur. 
\end{enumerate}
Because neither case i) nor case ii) holds, we can conclude using the intermediate value theorem that a solution $\tilde{\mu}$ of the form (\ref{eqn:lsForConvergence}) for (\ref{theorem3.2_eqn316}) has to exist. 
As $\nabla \hat{J}^{(i)}$ is Lipschitz continuous by assumption, this solution satisfies \begin{align*}
\left\Vert \mu^{(i)} -\tilde{\mu} \right\Vert &\geq \frac{ \left\Vert\nabla \hat{J}^{(i)}(\tilde{\mu}) - \nabla \hat{J}^{(i)}(\mu^{(i)}) \right\Vert}{C_{\nabla \hat{J}}^{(i)}} = \frac{ \left\Vert\nabla \hat{J}^{(i)}(\tilde{\mu}) - \nabla \hat{J}^{(i)}(\mu^{(i)}) \right\Vert \left\Vert p^{(i)} \right\Vert}{C_{\nabla \hat{J}}^{(i)} \left\Vert p^{(i)} \right\Vert} \\
&\geq \frac{ \left|\nabla_{p^{(i)}} \hat{J}^{(i)} (\tilde{\mu}) - \nabla_{p^{(i)}} \hat{J}^{(i)} (\mu^{(i)}) \right|}{C_{\nabla \hat{J}}^{(i)} \left\Vert p^{(i)} \right\Vert} = \frac{(1 - \kappa_{\textnormal{arm}})\cos \Phi^{(i)} \left\Vert \nabla \hat{J}^{(i)}(\mu^{(i)}) \right\Vert}{C_{\nabla \hat{J}}^{(i)}},
\end{align*}
where we used the Cauchy-Schwarz inequality to obtain the second inequality and (\ref{eqn_descent_dir}) as well as (\ref{theorem3.2_eqn316}) to obtain the last equality. We show that (\ref{subproblem_armijo_cond}) holds for all $\mu$ of the form (\ref{eqn:lsForConvergence}) satisfying \begin{align}
\label{eqn_mu_mui}
\left\Vert\mu^{(i)} - \mu \right\Vert \leq \frac{(1 - \kappa_{\textnormal{arm}})\cos \Phi^{(i)} \left\Vert \nabla \hat{J}^{(i)}(\mu^{(i)}) \right\Vert}{C_{\nabla \hat{J}}^{(i)}}.
\end{align}
First note that for $\bar{\mu}$ introduced in \eqref{eqn_mu_bar_kleiner} the following holds \begin{align}
\label{eqn_mu_bar_kleiner_2}
\left\Vert\bar{\mu} - \mu^{(i)} \right\Vert = \left\Vert \lambda \mu + (1 - \lambda) \mu^{(i)} - \mu^{(i)} \right\Vert = \left\Vert \lambda ( \mu - \mu^{(i)} ) \right\Vert \leq \left\Vert \mu - \mu^{(i)} \right\Vert.
\end{align}
Let $\mu$ be of form \eqref{eqn:lsForConvergence} s.t. \eqref{eqn_mu_mui} holds. By first utilizing the Cauchy-Schwarz inequality, followed by the Lipschitz continuity of $\nabla \hat{J}^{(i)}$ and inequality (\ref{eqn_mu_bar_kleiner_2}), we obtain
\begin{align*}
\left|\nabla_{p^{(i)}} \hat{J}^{(i)} (\bar{\mu}) - \nabla_{p^{(i)}} \hat{J}^{(i)} (\mu^{(i)}) \right| &\leq \left\Vert\nabla \hat{J}^{(i)}(\bar{\mu}) - \nabla \hat{J}^{(i)}(\mu^{(i)}) \right\Vert \left\Vert p^{(i)} \right\Vert \\
 &\leq C_{\nabla \hat{J}}^{(i)} \; \left\Vert\bar{\mu} - \mu^{(i)} \right\Vert \left\Vert p^{(i)}\right\Vert\\
 &\leq C_{\nabla \hat{J}}^{(i)} \; \left\Vert\mu - \mu^{(i)}\right\Vert \left\Vert p^{(i)}\right\Vert \\
\overset{(\ref{eqn_mu_mui})}&{\leq} C_{\nabla \hat{J}}^{(i)} \; \frac{(1 - \kappa_{\textnormal{arm}})\cos \Phi^{(i)} \left\Vert \nabla \hat{J}^{(i)}(\mu^{(i)}) \right\Vert}{C_{\nabla \hat{J}}^{(i)}} \left\Vert p^{(i)} \right\Vert \\ 
&= (1 - \kappa_{\textnormal{arm}})\cos \Phi^{(i)} \left\Vert \nabla \hat{J}^{(i)}(\mu^{(i)}) \right\Vert \left\Vert p^{(i)} \right\Vert \\
\overset{(\ref{eqn_descent_dir})}&{=} - \kappa_{\textnormal{arm}} \cos \Phi^{(i)} \left\Vert \nabla \hat{J}^{(i)}(\mu^{(i)}) \right\Vert \left\Vert p^{(i)} \right\Vert - \nabla_{p^{(i)}} \hat{J}^{(i)} (\mu^{(i)}),
\end{align*}
yielding \begin{align}
\label{eqn_upper_bound_nabla_derivative}
\nabla_{p^{(i)}} \hat{J}^{(i)} (\bar{\mu}) \leq - \kappa_{\textnormal{arm}} \cos \Phi^{(i)} \left\Vert \nabla \hat{J}^{(i)}(\mu^{(i)}) \right\Vert \left\Vert p^{(i)} \right\Vert.
\end{align}
We can now conclude that
\begin{align*}
\hat{J}^{(i)}(\mu) - \hat{J}^{(i)}(\mu^{(i)}) &= \left( \nabla \hat{J}^{(i)}(\bar{\mu}) \right)^T(\mu - \mu^{(i)} ) \\
\overset{(\ref{eqn_def_descent_dir})}&{=} \alpha \; \nabla_{p^{(i)}} \hat{J}^{(i)} (\bar{\mu}) \\
\overset{(\ref{eqn_upper_bound_nabla_derivative})}&{\leq} - \alpha \kappa_{\textnormal{arm}} \left\Vert \nabla \hat{J}^{(i)}(\mu^{(i)}) \right\Vert \left\Vert p^{(i)} \right\Vert  \cos \Phi^{(i)} \\
&=  - \kappa_{\textnormal{arm}} \left\Vert \nabla \hat{J}^{(i)}(\mu^{(i)}) \right\Vert\ \left\Vert\mu - \mu^{(i)} \right\Vert \cos \Phi^{(i)}, 
\end{align*}
thus (\ref{subproblem_armijo_cond}) holds. Due to the Hölder continuity of $c^{(i)}$ with $\alpha_{\textnormal{Höl}} = \sfrac{1}{2}$, a solution $\tilde{\tilde{\mu}}$ of $c^{(i)}(\mu) = 0$ satisfies \begin{alignat*}{2}
&\left\Vert \mu^{(i)} - \tilde{\tilde{\mu}} \right\Vert^{\frac{1}{2}} &&\geq \frac{ \left|c^{(i)}(\mu^{(i)}) - c^{(i)}(\tilde{\tilde{\mu}})\right|}{C_c^{(i)}} = \frac{c^{(i)}(\mu^{(i)})}{C_c^{(i)}} \\
\Longleftrightarrow \quad &\left\Vert \mu^{(i)} - \tilde{\tilde{\mu}} \right\Vert &&\geq \frac{\left(c^{(i)}(\mu^{(i)})\right)^2}{\left(C_c^{(i)} \right)^2}
\end{alignat*}
which means that (\ref{subproblem_constraint_cond}) holds for all $\mu$ of the form (\ref{eqn:lsForConvergence}) satisfying \begin{align*}
\left\Vert \mu^{(i)} - \mu \right\Vert \leq \frac{\left(c^{(i)}(\mu^{(i)})\right)^2}{\left(C_c^{(i)} \right)^2},
\end{align*} 
because then we obtain \begin{align*}
\left|c^{(i)}(\mu^{(i)}) - c^{(i)}(\mu) \right| &\leq C_c^{(i)} \left\Vert \mu^{(i)} - \mu \right\Vert^{\frac{1}{2}} \leq C_c^{(i)} \frac{c^{(i)}(\mu^{(i)})}{C_c^{(i)}} = c^{(i)}(\mu^{(i)}),
\end{align*}
yielding the desired result $c^{(i)} (\mu) \geq 0$, as we assume $c^{(i)}(\mu^{(i)}) > 0$. This proves the auxiliary result (\ref{eqn:theorem_decrease_agc_first_aux_result}). \\

We continue by proving another auxiliary statement, namely: The \ac{agc} point $\mu_{\textnormal{AGC}}^{(i)}$ satisfies \begin{align}
\label{eqn:theorem_descrease_agc_second_aux}
\left\Vert \mu_{\textnormal{AGC}}^{(i)} - \mu^{(i)} \right\Vert \geq \min \left\{ \kappa_{\nabla \hat{J}}^{(i)} \left\Vert\nabla \hat{J}^{(i)} (\mu^{(i)}) \right\Vert, \kappa_{\textnormal{bt}} \frac{c^{(i)}(\mu^{(i)})^2}{(C_c^{(i)})^2} \right\},
\end{align}
where $\kappa_{\nabla \hat{J}}^{(i)} := \min \left\{ 1, \frac{\kappa_{\textnormal{bt}}(1 - \kappa_{\textnormal{arm}})\cos \Phi^{(i)}}{C_{\nabla \hat{J}}^{(i)}} \right\}$. The first line search point  $\mu^{(i)}(0) = \mu^{(i)} + p^{(i)}$ satisfies \begin{align}
\label{eqn_upper_bound_first_iterate_agc_proof}
\left\Vert \mu^{(i)}(0) - \mu^{(i)} \right\Vert = \left\Vert p^{(i)} \right\Vert = \left\Vert \nabla \hat{J}^{(i)} (\mu^{(i)}) \right\Vert.
\end{align}
If $\mu^{(i)}(0)$ satisfies (\ref{eqn:theorem_decrease_agc_first_aux_result}), it gets accepted as $\mu_{\textnormal{AGC}}^{(i)}$. In this case, the required upper bound \eqref{eqn:theorem_descrease_agc_second_aux} holds trivially due to (\ref{eqn_upper_bound_first_iterate_agc_proof}) as $ \kappa_{\nabla \hat{J}}^{(i)} \leq 1$: \begin{align*}
\left\Vert \mu^{(i)}(0) - \mu^{(i)} \right\Vert \geq \min \left\{ \kappa_{\nabla \hat{J}}^{(i)} \; \left\Vert \nabla \hat{J}^{(i)} (\mu^{(i)}) \right\Vert, \kappa_{\textnormal{bt}} \frac{c^{(i)}(\mu^{(i)})^2}{(C_c^{(i)})^2} \right\}.
\end{align*} 
 If $\mu^{(i)}(0)$ is not accepted as $\mu_{\textnormal{AGC}}^{(i)}$, backtracking occurs. However, it must stop before \begin{align}
\label{gleichung319}
\left\Vert\mu^{(i)}(j) - \mu^{(i)} \right\Vert \leq \kappa_{\textnormal{bt}} \min \left\{ \frac{(1 - \kappa_{\textnormal{arm}})\cos\Phi^{(i)} \left\Vert \nabla \hat{J}^{(i)}(\mu^{(i)}) \right\Vert}{C_{\nabla \hat{J}}^{(i)}} , \frac{\left(c^{(i)}(\mu^{(i)})\right)^2}{\left(C_c^{(i)}\right)^2} \right\}
\end{align}
holds, since otherwise the previous backtracking point $\mu^{(i)}(j-1)$ would satisfy  (\ref{eqn:theorem_decrease_agc_first_aux_result}) and thus be already accepted as $\mu_{\textnormal{AGC}}^{(i)}$. Therefore, (\ref{gleichung319}) cannot hold for $\mu_{\textnormal{AGC}}^{(i)}$. According to these two arguments, $\mu^{(i)}_{\textnormal{AGC}}$ satisfies (\ref{eqn:theorem_descrease_agc_second_aux}). \\

Now the desired result \eqref{theorem3.2_eqn315} follows immediately from (\ref{subproblem_armijo_cond}) and (\ref{eqn:theorem_descrease_agc_second_aux}).
\end{proof}

\section{Computation of the \ac{rkhs}-norm from Section \ref{subsection_numerical_examples_1D}}
\label{appendix_rhhsnorm}
Following \cite[Theorem 10.12]{wendland_2004}, the \ac{rkhs}-norm - corresponding to a translation invariant \ac{spd} kernel $k$ with $\phi \in C(\mathbb{R}) \cap L^1(\mathbb{R})$ - of a univariate function $J \in L^2(\mathbb{R}) \cap C(\mathbb{R})$,  s.t $\frac{\mathcal{F}(J)}{\sqrt{\mathcal{F}(\phi)}} \in L^2(\mathbb{R})$,  can be computed via 
\begin{align*}
    \Vert J\Vert_{\mathcal{H}_{k}}^2 = \frac{1}{\sqrt{2 \pi}} \int_{\mathbb{R}} \frac{|\mathcal{F}(J)(\omega)|^2}{\mathcal{F}(\phi)(\omega)} d\omega,
\end{align*}
where $\mathcal{F}$ denotes the Fourier transformation, given for $J \in L^1(\mathbb{R})$ by  \begin{align*}
    \mathcal{F}(J)(\omega) := \frac{1}{\sqrt{2 \pi}} \int_{\mathbb{R}} J(x) \exp(- i \omega x) dx.
\end{align*}
For the Fourier transformations of the target function $J(\mu) = - \exp\left(-\mu^2\right) + 3 \exp(- 0.001 \mu^2)$ and the radial basis function of the Gaussian kernel (in one dimension) $\phi_G(r;\varepsilon) := \exp(- \varepsilon^2 r^2) $ we obtain: \begin{align*}
    \mathcal{F}(J)(\omega) &= - \exp\left(- \frac{\omega^2}{4}\right) + 67082 \exp(-250 \omega^2) \\
    \mathcal{F}(\phi_G)(\omega) &= \frac{1}{\sqrt{2 \varepsilon^2}} \exp\left(- \frac{\omega^2}{4 \varepsilon^2}\right).
\end{align*}

Together this yields for $\varepsilon^2 > \sfrac{1}{2}$ the following result {\small \[
    \Vert J \Vert_{\mathcal{H}_{k}(\mathcal{P})}^2 = \frac{\varepsilon^2 \left(\sqrt{2000\varepsilon^{2} - 1} \left( \sqrt{1001 \varepsilon ^{2} - 1} - 33541 \cdot 2^{\frac{5}{2}} \sqrt{2\varepsilon^{2} - 1}\right) + 8999989448 \sqrt{2 \varepsilon^{2} - 1} \sqrt{1001\varepsilon^{2} - 1}\right)}{\sqrt{2\varepsilon^{2} - 1} \sqrt{1001\varepsilon^{2} - 1} \sqrt{2000\varepsilon^{2} - 1}}.
\]}




\end{document}